\newcommand{\gen}[1]{\left\langle #1 \right\rangle}
\newcommand{\card}[1]{\lvert #1 \rvert}
\newcommand{\ol}[1]{\overline{#1}}
\let\conj=\induced
\def\N{\mathbb{N}}
\DeclareMathOperator{\Aut}{Aut}
\DeclareMathOperator{\Inn}{Inn}
\DeclareMathOperator{\Hol}{Hol}
\def\B{\mathsf{B}}
\def\L{\mathsf{L}} 
\def\S{\mathsf{S}}
\DeclareMathOperator{\Cent}{C}
\DeclareMathOperator{\Norm}{N}
\newcommand{\id}{\operatorname{id}}
\newtheorem{teo}{Theorem}[section]
\newtheorem{prop}[teo]{Proposition}
\newtheorem{lemma}[teo]{Lemma}
\newtheorem{cor}[teo]{Corollary}
\theoremstyle{definition}
\newtheorem{defi}[teo]{Definition}
\theoremstyle{remark}
\newtheorem{remark}[teo]{Remark}
\theoremstyle{definition}
\newtheorem{ex}[teo]{Example}
\newcommand\puteqnum{\refstepcounter{equation}\textup{(\theequation)}}
\title{Categories of skew left braces and trifactorised groups}
\author{A. Ballester-Bolinches%
\thanks{Departament de Matem\`atiques, Universitat de Val\`encia, Dr.\ Moliner, 50, 46100 Burjassot, Val\`encia, Spain; \texttt{Adolfo.Ballester@uv.es}, \texttt{Ramon.Esteban@uv.es}, \texttt{Pedro.A.Perez@uv.es}, \texttt{Vicent.Perez-Calabuig@uv.es}; ORCID 0000-0002-2051-9075, 0000-0002-2321-8139,  0009-0009-7082-9002, 0000-0003-4101-8656}
\and R. Esteban-Romero\addtocounter{footnote}{-1}\footnotemark \and P. P\'erez-Altarriba\addtocounter{footnote}{-1}\footnotemark \and V. P\'erez-Calabuig\addtocounter{footnote}{-1}\footnotemark}
\begin{document}
\date{}
\maketitle
\begin{abstract}
  The main objective of this paper is to deepen the relationship between skew left braces and trifactorised groups that encodes the information about skew left braces, their structure, their quotients, and their homomorphisms.
  
  \emph{Keywords: category, skew left brace, trifactorised group}

  \emph{Mathematics Subject Classification (2020):
    16T25, 
    81R50, 
    20C35, 
    20C99, 
    20D40,  
  }
\end{abstract}
\section{Introduction}
A \emph{skew left brace} $(B, {+}, {\cdot})$ consists of a set with two binary operations $+$ and~$\cdot$ such that $(B, {+})$ and $(B, {\cdot})$ are groups linked by a distributive-like law $a(b+c)=ab-a+ac$ for $a$, $b$, $c\in B$. As it is usual in this context, the multiplication symbol is omitted, $-a$ denotes the symmetric element of $a$ with respect to~$+$, and $a-b$ denotes $a+(-b)$. When the operations on $B$ are clear from the context, we will refer to a brace as $B$ instead of $(B, {+}, {\cdot})$. Skew left braces were introduced in \cite{GuarnieriVendramin17} and generalise the \emph{left braces} introduced in \cite{Rump07}, in which the first operation is also assumed to be commutative. Skew left braces are interesting algebraic structures, because they represent the exact algebraic framework to study bijective non-degenerate solutions to the
Yang–Baxter equation (see \cite{Yang67,Baxter73,Drinfeld92}). For simplicity, we will use the word \emph{brace} with the meaning of \emph{skew left brace}. 

One of the most important tools in the structural study of braces is the analysis of the trifactorised group associated with the action of the multiplicative group on the additive group. Many brace structural properties can be translated into group structural properties of this trifactorised group which is called here \emph{large trifactorised group} (see \cite{BallesterEsteban22}). Another trifactorised group associated with a brace $B$ appears in a natural way when identifying $B$ with a regular subgroup of the holomorph of the additive group. This group, called here \emph{small trifactorised group}, is extremely useful to construct braces with some prescribed properties (see~\cite{BallesterEstebanJimenezPerezC24-ppt-braces-add-group-trivial-centre}). Small trifactorised groups associated with particular braces lead to construction of interesting examples of groups that can be used to answer some important questions about products of groups (see \cite[Chapter~6]{AmbergFranciosiDeGiovanni92}). 

The aim of this paper is twofold. Firstly, we study the complete family of trifactorised groups associated with a given brace and secondly, we study braces and their associated trifactorised groups from a categorical point of view.  Many of the problems that we discuss in this paper appear in a natural way when considering representations of braces. The following two remarkable results are proved.

\begin{enumerate}
\item There is a bijection between the isomorphism classes of the trifactorised groups associated with a brace $B$ and the orbits of an action of the automorphism group of $B$ over some normal subgroups of the multiplicative group. This result will be useful for computing the trifactorised groups associated with $B$  (Theorem~\ref{teo-orb_3factAss}).

\item  All trifactorised groups associated with a brace $B$ are quotients of the large trifactorised group associated with $B$ and have the small trifactorised group associated with $B$ as a quotient (Theorem~\ref{sql}).

\end{enumerate}

We also consider two main categories: the category $\mathbf{SKB}$ whose objects are braces,  and whose morphisms are homomorphisms of braces, that are maps between braces that preserve both operations, and the category  $\mathbf{3factGrp}$ whose objects are trifactorised groups satisfying some restrictions, and the morphisms are group homomorphisms which are compatible with the triple factorisation. We show here that these two categories are closely related and have interesting properties. It is worth mentioning that some properties of  $\mathbf{SKB}$ have been studied in \cite{BournFacchiniPompili23}. 



The contents of the paper are organised as follows. We will begin by introducing two categories in Section~\ref{sec-large-small}, corresponding to the large and small trifactorised groups. Then we will introduce $\mathbf{3factGrp}$ in Section~\ref{sec-3fact}, where we will show that for every brace we can define trifactorised groups satisfying certain conditions and that every trifactorised group with these conditions can be understood as a trifactorised group associated with a brace. We also show how to construct brace homomorphisms from trifactorised group morphisms, and give the exact condition for a brace homomorphism to define a trifactorised group morphism. It turns out that the categories of small and large trifactorised groups are subcategories of $\mathbf{3factGrp}$. Our next objective, developed in Section~\ref{sec-substruc}, is to identify the substructures of a brace~$B$ in a trifactorised group associated with~$B$, and study the substructures of trifactorised groups. We will see that there is a bijection between subbraces and these substructures. We also indicate how to use the trifactorised group to compute images of subgroups of the multiplicative group and preimages of subgroups of the additive group under the associated derivation with the help of the trifactorised group. In Section~\ref{sec-quo}, we characterise the normal subgroups of our trifactorised groups giving quotients that can be regarded as trifactorised groups, and show that they are associated with the corresponding brace quotient. 

The efficiency of algorithms for finite groups in computer algebra systems like \textsf{GAP} \cite{GAP4-14-0}, especially in the case of soluble groups and, in, to a lesser extent, in the case of permutation groups, makes us think that encoding braces as trifactorised groups can help us to simplify computations with braces. Trifactorised groups introduced here are, in general, of smaller order than the groups given in~\cite{BallesterEsteban22}. This can be used to save memory. The saving when using the small trifactorised groups is very relevant in the case of trivial braces, where the small trifactorised group coincides with the additive group.

We also would like to mention that trifactorised groups will be in a forthcoming paper to present a definition of representations of braces over a vector space. We will see there that these brace representations are equivalent to some representations of trifactorised groups over a vector space. It turns out that trifactorised groups are a natural tool to reduce some problems on brace representations to problems on group representations. 

\section{Large and small trifactorised groups}\label{sec-large-small}
Given a brace $B$, there is an action of the multiplicative group $C=(B, {\cdot})$ on the additive group $K=(B, {+})$ called the \emph{lambda action} or the \emph{lambda map} which is defined by $\lambda(a)=\lambda_a$ for every $a\in B$, where $\lambda_a(b)=-a+ab$ for all $a$, $b\in B$. Sysak \cite{Sysak11-Ischia10} observed that the identity map $\delta\colon C\longrightarrow K$ is a bijective derivation or $1$-cocycle with respect to the lambda action and that the
semidirect product $G=[K]C$ with respect to the lambda action possesses a triple factorisation $G=KC=KD=CD$, where $D=\{\delta(c)c\mid  c\in C\}$ is a subgroup of~$G$, and $K\cap C=K\cap D=C\cap D=1$. We will refer to it as the \emph{large trifactorised group} associated with~$B$, and denoted by $\mathsf{L}(B) = (G, K, D, C)$.


It seems natural to consider the category $\mathbf{L3factGrp}$ whose objects are trifactorised groups of the form $(G, K, H, C)$ where $K\trianglelefteq G$, $H\le G$, $C\le G$, $G=KH=KC=HC$, $K\cap H=K\cap C=H\cap C=1$, and whose morphisms $f\colon (G_1, K_1, H_1, C_1)\longrightarrow (G_2,K_2,H_2, C_2)$ are group homomorphisms $f\colon G_1\longrightarrow G_2$ such that $f(K_1)\le K_2$, $f(H_1)\le H_1$, and $f(C_1)\le C_2$. If $f\colon B_1\longrightarrow B_2$ is a brace homomorphism and $\mathsf{L}(B_1)=(G_1, K_1, H_1, C_1)$ and $\mathsf{L}(B_2)=(G_2, K_2, H_2, C_2)$ are the large trifactorised groups corresponding to $B_1$ and $B_2$, respectively, then the induced group homomorphisms $K_1\longrightarrow K_2$ and $C_1\longrightarrow C_2$ induce a group homomorphism $G_1\longrightarrow G_2$ and, given an element $(\delta(c), c)\in H_1$, where $\delta\colon C_1\longrightarrow K_1$ is the identity map of~$B_1$, regarded as a bijective derivation associated with the lambda map, we have that $(f(\delta(c)),f(c))=(\bar\delta(f(c)), f(c))\in H_2$, where $\bar \delta\colon C_2\longrightarrow K_2$ is the bijective derivation associated with~$B_2$. If we call this homomorphism $\mathsf{L}(f)$, we have that $\mathsf{L}\colon \mathbf{SKB}\longrightarrow \mathbf{L3factGrp}$ becomes a functor.

Another natural approach to braces is by means of regular subgroups of the holomorph of the additive group. When this additive group corresponds to a vector space, we have regular subgroups of the affine group, analysed in \cite{CarantiDallaVoltaSala06,CatinoColazzoStefanelli15,CatinoRizzo09}.





Let $B$ be a brace and let $K=(B, {+})$. By \cite[Theorem~4.2]{GuarnieriVendramin17}, $H=\{(a,\lambda_a)\mid  a\in B\}$ is a regular subgroup of $\Hol(K)$ that is isomorphic to $(B, {\cdot})$. If we consider the subgroup $S = KH \leq \Hol(K)$, it follows that $S=KH=KE=HE$,
where $E = \{(0,\lambda_b)\mid  b\in B\}$ and $\Cent_{E}(K)= K\cap E =H\cap E =1$. We call $\mathsf{S}(B)=(S, K, H, E)$ the \emph{small trifactorised group} associated with~$B$. 

\bigskip


As in the case of large trifactorised groups, we can consider the category $\mathbf{S3factGrp}$ of small trifactorised groups. The objects of this category are small trifactorised groups $(G, K, H, E)$ with $G$ a group, $K\trianglelefteq G$, $H\le G$, $E\le G$, $G=KH=KE=HE$, $K\cap E=H\cap E=1$, and $\Cent_E(K)=1$. As in $\mathbf{L3factGrp}$, a morphism $f\colon (G_1, K_1, H_1, E_1)\longrightarrow (G_2, K_2, H_2, E_2)$ of $\mathbf{S3factGrp}$ is a group homomorphism $f\colon G_1\longrightarrow G_2$ such that $f(K_1)\le K_2$, $f(H_1)\le H_2$, and $f(E_1)\le E_2$. However, in general, we cannot associate to a brace homomorphism $ f\colon B_1\longrightarrow B_2$, where $\mathsf{S}(B_1)=(G_1, K_1, H_1, E_1)$, $\mathsf{S}(B_2)=(G_2, K_2, H_2, E_2)$, a morphism $\bar f$ of $\mathbf{S3factGrp}$ such that $\bar f|_{K_1}$ coincides with the group homomorphism $K_1\longrightarrow K_2$ induced by $f$.
\begin{ex}\label{ex-C2-A5}
  Let $B_1$ be a trivial brace of order~$2$ and let $B_2$ be a brace with additive group isomorphic to the alternating group $A_5$ of degree~$5$ and in which the multiplication is the operation opposite to~$+$. We have that $\mathsf{S}(B_1)=(G_1, K_1, H_1, E_1)$ with $G_1=H_1=K_1$ cyclic of order~$2$ and $E_1=1$, and $\mathsf{S}(B_2)=(G_2, K_2, H_2, E_2)$ with $K_2\cong A_5$, $H_2=\{(t, \alpha_t^{-1})\mid  t\in A_5\}\cong A_5$, $E_2=\Inn(A_5)$ and $G_2=K_2H_2\cong A_5\times A_5$. It is clear that $f\colon B_1\longrightarrow B_2$ given by $f(a)=(1,2)(3,4)$, where $a$ is the generator of the additive group of $B_1$, defines a brace homomorphism, but there is no group homomorphism $f\colon G_1\longrightarrow G_2$ such that $f(K_1)\le K_2$, $f(H_1)\le H_2$ and $f(E_1)\le E_2$ with $f(a)=(1,2)(3,4)\in K_2$, because $f(a)\in f(K_1\cap H_1)\le K_2\cap H_2=1$.
\end{ex}

\section{A new category of trifactorised groups}\label{sec-3fact}

Let $(B,+,\cdot)$ be a  brace with additive group $K$ and multiplicative group $C$. Let $\lambda\colon C\longrightarrow\Aut(K)$ be the corresponding lambda map and $\delta \colon C\longrightarrow K$ the identity map. Consider $\eta\colon C\longrightarrow E$ a group epimorphism such that $\ker\eta\leq\ker\lambda$. Then we can define the group homomorphism $\bar\lambda\colon E\longrightarrow \Aut(K)$ such that $\bar\lambda(\eta(c))=\lambda(c)$ for all $c\in C$. Hence $E$ acts on $K$ by automorphisms, and so we can consider the corresponding semidirect product $G=[K]E$.

Consider the map from $C$ to $G$ defined by $c\mapsto \delta(c)\eta(c)$, which is injective with image $H=\{\delta(c)\eta(c)\mid c\in C\}$. Moreover, given $c_1,c_2\in C$,
\begin{align*}
(\delta(c_1)\eta(c_1))(\delta(c_2)\eta(c_2))&=\delta(c_1)\bar\lambda_{\eta(c_1)}(\delta(c_2))\eta(c_1)\eta(c_2)\\&=\delta(c_1)\lambda_{c_1}(\delta(c_2))\eta(c_1)\eta(c_2)\\&=\delta(c_1c_2)\eta(c_1c_2).
\end{align*}
Therefore $H$ is a subgroup of $G$ isomorphic to $C$.

Every element $g\in G$ can be uniquely written as a product $g=\delta(c_1)\eta(c_2)$ for $c_1,c_2\in C$, and 
\begin{equation}\label{eq-3fact}
G=KE=KH=HE,\quad K\cap E=H\cap E=1.
\end{equation}

The map $\sigma\colon H\longrightarrow K$ given by $\sigma(\delta(c)\eta(c))=\delta(c)$ is bijective and 
\begin{align*}
\sigma((\delta(c_1)\eta(c_1))(\delta(c_2)\eta(c_2)))&=\sigma(\delta(c_1)\lambda_{c_1}(\delta(c_2))\eta(c_1c_2))\\&=\delta(c_1)\lambda_{c_1}(\delta(c_2))\\&= \sigma(\delta(c_1)\eta(c_1))\bar\lambda_{\eta(c_1)}(\sigma(\delta(c_2)\eta(c_2))).
\end{align*}
Therefore, $\sigma$ is a bijective derivation with respect to the action $\bar\lambda\circ \pi_E|_H\colon H\longrightarrow\Aut(K)$, where $\pi_E\colon G\longrightarrow E$ is the natural projection.

\begin{defi}
The tuple $(G,K,H,E)$ is  called a \emph{(generalised) trifactorised group}  associated with $(B,+,\cdot)$.
\end{defi}
We define the category \textbf{3factGrp} of (generalised) trifactorised groups, that has as objects tuples $(G,K,H,E)$ where $G$ is a group, $K$ is a normal subgroup of $G$, $H$ and $E$ are subgroups of $G$, and they satisfy~\eqref{eq-3fact}, and as morphisms $f\colon (G_1,K_1,H_1,E_1)\longrightarrow (G_2,K_2,H_2,E_2)$ group homomorphisms $f\colon G_1\longrightarrow G_2$ such that $f(K_1)\leq K_2$, $f(H_1)\leq H_2$, and $f(E_1)\leq E_2$. This category has as subcategories \textbf{L3factGrp} and \textbf{S3factGrp}. From now on when we talk about a trifactorised group we will mean a tuple in this category.

We have shown above that there are different trifactorised groups associated with a brace.  We will prove now that the converse is also true: given a trifactorised group $(G,K,H,E)$, we can associate a brace and a bijective derivation $\sigma\colon H\longrightarrow K$.

\begin{prop}\label{prop-BrAss3fact}
Let $(G,K,H,E)$ be a trifactorised group. Every element $g\in G$ can be uniquely written as a product $g=k_ge_g$, where $k_g\in K$ and $e_g\in E$. Then:
\begin{enumerate}
\item $H$ acts on $K$ via the action $\tilde\lambda$ given by $\tilde\lambda(h)(k)=\conj{e_h}{k}=e_hke_h^{-1}$ for $h\in H$, $k\in K$.
\item The map $\sigma\colon H\longrightarrow K$ given by $\sigma(h)=k_h$ for $h\in H$ is a bijective derivation with respect to~$\tilde\lambda$.
\item If we define the operation $\boxdot$ on $K$ as $k_1\boxdot k_2 =k_1({\conj{e_{k_1}}{k_2}})$ for $k_1$, $k_2\in K$, it follows that $(K,\cdot,\boxdot)$ is a skew left brace, with $(K,\boxdot)\cong H$ and lambda map $\tilde\lambda\circ\sigma^{-1}\colon (K,\boxdot)\longrightarrow\Aut(K)$.
\end{enumerate}
\end{prop}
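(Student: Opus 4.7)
My plan is to treat the three claims sequentially; most of the work amounts to careful bookkeeping that exploits the fact that both $K$ and $H$ are transversals to $E$ in $G$. The unique decomposition $g=k_g e_g$ is immediate from $G=KE$ together with $K\cap E=1$, and the computation $(k_1 e_1)(k_2 e_2)=k_1\,\conj{e_1}{k_2}\,e_1 e_2$ (valid because $K\trianglelefteq G$) shows that the projection $\pi_E\colon G\longrightarrow E$, $g\mapsto e_g$, is a group homomorphism. Restricting $\pi_E$ to $H$ yields a homomorphism $H\longrightarrow E$, $h\mapsto e_h$, and post-composing with conjugation (which is an automorphism of $K$ by normality) gives the claimed homomorphism $\tilde\lambda\colon H\longrightarrow\Aut(K)$ in~(i).

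For~(ii), I would verify bijectivity of $\sigma$ using the twin identities $G=KE=HE$ and $K\cap E=H\cap E=1$. Surjectivity: given $k\in K$, use $G=HE$ to write $k=he$ with $h\in H$, $e\in E$; then $h=k\,e^{-1}$ is the $KE$-decomposition of $h$, so $\sigma(h)=k$. Injectivity: if $\sigma(h_1)=\sigma(h_2)=k$, then $h_2^{-1}h_1=e_{h_2}^{-1}e_{h_1}\in H\cap E=1$, so $h_1=h_2$. The derivation property is then a direct calculation: expanding
\[
h_1 h_2=k_{h_1}e_{h_1}k_{h_2}e_{h_2}=k_{h_1}\,\conj{e_{h_1}}{k_{h_2}}\,e_{h_1}e_{h_2}
\]
in the unique $KE$-form and reading off the $K$-part gives $\sigma(h_1 h_2)=\sigma(h_1)\,\tilde\lambda(h_1)(\sigma(h_2))$.

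For~(iii), transporting the group operation of $H$ to $K$ through $\sigma$ automatically makes $\sigma\colon H\longrightarrow (K,\boxdot)$ a group isomorphism; unwinding the formula for $\sigma(h_1 h_2)$ above recovers precisely $k_1\boxdot k_2=k_1\cdot\conj{e_{h_1}}{k_2}$ with $h_1=\sigma^{-1}(k_1)$, so the ``$e_{k_1}$'' of the statement is shorthand for $e_{\sigma^{-1}(k_1)}$. The skew left brace axiom
\[
k_1\boxdot(k_2\cdot k_3)=(k_1\boxdot k_2)\cdot k_1^{-1}\cdot(k_1\boxdot k_3)
\]
then reduces, after substituting the formula for $\boxdot$, to the fact that $\conj{e_{h_1}}{-}$ is an automorphism of $(K,\cdot)$ (which is exactly why normality of $K$ was needed). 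The identification of the lambda map with $\tilde\lambda\circ\sigma^{-1}$ is immediate from $\lambda_{k_1}(k_2)=k_1^{-1}(k_1\boxdot k_2)=\conj{e_{h_1}}{k_2}$. The main obstacle is purely notational: one must consistently work with the canonical $KE$-decomposition while keeping in mind that the new group structure $(K,\boxdot)$ lives on $K$ only through the non-canonical identification with $H$ supplied by $\sigma$, and not confuse the decomposition of $k_1\in K$ (whose $e$-component in $KE$ is trivial) with the $e$-component of $\sigma^{-1}(k_1)\in H$.
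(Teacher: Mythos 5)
Your proposal is correct and follows essentially the same route as the paper: unique $KE$-decomposition, the projection $\pi_E|_H$ giving the action, bijectivity of $\sigma$ from $G=HE$ and $H\cap E=1$, and the cocycle identity by direct expansion, including the correct reading of $e_{k_1}$ as $e_{\sigma^{-1}(k_1)}$. The only difference is in part~(iii), where the paper invokes \cite[Proposition~1.11]{GuarnieriVendramin17} to obtain the brace structure from the bijective derivation, while you verify the skew left brace axiom directly from the formula for $\boxdot$; both are valid.
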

\begin{proof}
  It is easy to check that $e_{h_1h_2}=e_{h_1}e_{h_2}$ for $h_1$, $h_2\in H$. This implies that $\tilde\lambda\colon H\longrightarrow\Aut(K)$ is an action of $H$ on $K$. Given $k\in K$, there exists $e\in E$ and $h\in H$ such that $k=he$, hence $ke^{-1}\in H$. Let us suppose that there exist $e_1$, $e_2\in E$ such that $ke_1$, $ke_2\in H$, then $e_1^{-1}e_2\in H\cap E=1$. Consequently, $\sigma\colon H\longrightarrow K$ is bijective. Furthermore, if $h_1=k_1e_1$, $h_2=k_2e_2\in H$ with $k_1$, $k_2\in K$ and $e_1$, $e_2\in E$, we have that
$\sigma(h_1h_2)=\sigma(k_1({\conj{e_1}{k_1}})e_1e_2)=k_1({\conj{e_1}{k_1}})=\sigma(h_1)\tilde\lambda(h_1)(\sigma(h_2))$.
That is, $\sigma$ is a bijective derivation with respect to $\tilde\lambda$.

By \cite[Proposition~1.11]{GuarnieriVendramin17}, we can define a second operation on $K$ such that $K$ becomes a brace, which is  $k_1\circ k_2=\sigma({\sigma^{-1}(k_1)\sigma^{-1}(k_2)})=k_1\boxdot k_2$. It is obvious that $\sigma^{-1}\colon (K,\boxdot)\longrightarrow H$ is a group isomorphism. The lambda map is $\lambda_{k_1}(k_2)=k_1^{-1}({k_1\boxdot k_2})=\conj{e_{k_1}}{k_2}=\tilde\lambda(k_1e_{k_1})(k_2)=(\tilde\lambda\circ \sigma^{-1}(k_1))(k_2)$.\qedhere
\end{proof}

\begin{defi}
Given a trifactorised group $(G,K,H,E)$, the brace defined in Proposition \ref{prop-BrAss3fact} is called \emph{the associated brace} of $(G,K,H,E)$ and it is denoted by $\B(G,K,H,E)$.
\end{defi}

\begin{prop}\label{prop-Br3factEquiv}
\begin{enumerate}
\item Every trifactorised group $(G,K,H,E)$ is a trifactorised group associated with the brace $\B(G,K,H,E)$.
\item Let $(B,+,\cdot)$ be a brace, and let $(G,K,H,E)$ be a trifactorised group associated with $(B,+,\cdot)$, then $\B(G,K,H,E)=(B,+,\cdot)$.
\end{enumerate}
\end{prop}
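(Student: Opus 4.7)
The plan is to check both parts by making the correspondence between trifactorised groups and pairs $(B,\eta)$ completely explicit. In each case, most of the work is bookkeeping with the decomposition $g=k_ge_g$ for $g\in G$, and the only subtle point is a notational one addressed at the end.

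For (1), fix $(G,K,H,E)$ and set $(K,\cdot_G,\boxdot)=\B(G,K,H,E)$, so the multiplicative group is $C=(K,\boxdot)$ and $\sigma^{-1}\colon C\to H$ is a group isomorphism. I would define $\eta\colon C\to E$ as the composite of $\sigma^{-1}$ with the map $\pi\colon H\to E$, $h\mapsto e_h$. The identity $e_{h_1h_2}=e_{h_1}e_{h_2}$ already proved inside Proposition~\ref{prop-BrAss3fact} makes $\pi$ a group homomorphism; surjectivity follows from $G=KH$, since any $e\in E$ can be written uniquely as $e=k'h$ with $k'\in K$, $h\in H$, and then $h=(k')^{-1}e$ gives $e_h=e$ by uniqueness of the $KE$-decomposition. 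The inclusion $\ker\eta\le\ker\lambda$ is immediate from the formula $\lambda_k(\cdot)=\conj{e_{\sigma^{-1}(k)}}{\cdot}$. With $(\B(G,K,H,E),\eta)$ as input, the construction at the start of Section~\ref{sec-3fact} returns the semidirect product $[K]E$ under $\bar\lambda$, and this is canonically identified with $G$ via $(k,e)\mapsto ke$ because the action $\bar\lambda(e)$ on $K$ coincides with conjugation by $e$ in $G$. Under this identification $K$ and $E$ match, and the new $H'$ consists of products $\delta(c)\eta(c)=c\cdot e_{\sigma^{-1}(c)}=k_he_h=h$, hence $H'=H$.

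For (2), let $(G,K,H,E)$ be the trifactorised group built from $(B,+,\cdot)$ and an epimorphism $\eta\colon C\to E$. Each $h\in H$ has the form $h=\delta(c)\eta(c)$ with $\delta(c)\in K$ and $\eta(c)\in E$, so its $KE$-decomposition is $k_h=\delta(c)=c$ and $e_h=\eta(c)$. Consequently, for $k\in K$ identified with $B$, the unique $h\in H$ with $k_h=k$ is $\sigma^{-1}(k)=k\cdot\eta(k)$, and thus $e_{\sigma^{-1}(k)}=\eta(k)$. Substituting in the definition of $\boxdot$ yields
\[
k_1\boxdot k_2 = k_1\cdot\conj{\eta(k_1)}{k_2} = k_1+\bar\lambda(\eta(k_1))(k_2) = k_1+\lambda_{k_1}(k_2) = k_1+(-k_1+k_1k_2) = k_1k_2,
\]
using that $\bar\lambda\circ\eta=\lambda$ and the defining formula of the lambda map of~$B$. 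Since the additive group of $\B(G,K,H,E)$ is by definition $(K,\cdot_G)=(B,+)$, we obtain $\B(G,K,H,E)=(B,+,\cdot)$ on the nose.

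The hardest aspect of executing this plan is avoiding a notational trap in Proposition~\ref{prop-BrAss3fact}: the symbol $e_{k_1}$ appearing for $k_1\in K$ in the definition of $\boxdot$ is not the $E$-part of the decomposition of $k_1$ inside $G$ (which is trivially $1$), but is really shorthand for $e_{\sigma^{-1}(k_1)}$. Once this reading is fixed at the outset, both parts reduce to direct verifications along the lines above, and I would organise the write-up by first pinning down this notation and the $KE$-decomposition bookkeeping, then exhibiting $\eta$ in (1) and performing the substitution in (2).
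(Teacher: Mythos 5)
Your proposal is correct and follows essentially the same route as the paper: for (1) you take the same epimorphism $\eta=\pi_E|_H\circ\sigma^{-1}$ and check $\ker\eta\le\ker\lambda$, and for (2) you perform the same substitution showing $k_1\boxdot k_2=k_1+\lambda_{k_1}(k_2)=k_1k_2$; you merely supply a few details the paper leaves implicit (surjectivity of $\pi_E|_H$, the identification of $[K]E$ with $G$, and the reading of $e_{k_1}$ as $e_{\sigma^{-1}(k_1)}$, which is indeed the intended one). One tiny slip: the $KH$-decomposition $e=k'h$ need not be unique since $K\cap H$ can be nontrivial, but your argument only uses its existence together with uniqueness of the $KE$-decomposition, so nothing is affected.
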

\begin{proof}
\begin{enumerate}
\item Following the notation of Proposition \ref{prop-BrAss3fact} and denoting $C=(K,\boxdot)$, we can consider $\eta=\pi_E|_H\circ\sigma^{-1}\colon C\longrightarrow E$, where $\pi_E\colon G\longrightarrow E$ is the natural projection. We have that $\eta$ is a group epimorphism, because $\sigma^{-1}$ is a group isomorphism by the argument of the last paragraph of the proof of Proposition~\ref{prop-Br3factEquiv} and $\pi_E$ is a group epimorphism. Moreover, $\ker\eta=\sigma({\ker\pi_E\cap H})=\sigma(K\cap H)=K\cap H$. If $k\in\ker\eta$, then $\lambda(k)=\tilde\lambda(k)=\id$, therefore $\ker\eta\leq\ker\lambda$. This epimorphism gives us a trifactorised group associated with $\B(G,K,H,E)$, which is exactly $(G,K,H,E)$.
\item Let $K$ and $C$ be the additive group and the multiplicative group of $B$ respectively and let $\lambda\colon C\longrightarrow \Aut(K)$ be the lambda map of $(B,+,\cdot)$. Write $\delta\colon C\longrightarrow K$ the identity map. Consider $\eta\colon C\longrightarrow E$ the epimorphism that corresponds to the associated trifactorised group $(G,K,H,E)$. The brace $\B(G,K,H,E)$, as a set, is $B$, and its additive operation is just the additive operation of $(B,+,\cdot)$. Furthermore
  \[\delta(c_1)\boxdot\delta(c_2)=\delta(c_1)({\conj{\eta(c_1)}{\delta(c_2)}})=\delta(c_1)\lambda_{c_1}(\delta(c_2))=\delta(c_1c_2).\]
This means that $(B,\boxdot)=(B,\cdot)$.\qedhere
\end{enumerate}
\end{proof}

\subsection{Morphisms of trifactorised groups}\label{subsec-3factMorph}

\begin{defi}
A \emph{morphism} $f\colon (G_1,K_1,H_1,E_1)\longrightarrow (G_2,K_2,H_2,E_2)$ between two trifactorised groups  is a group homomorphism $f\colon G_1\longrightarrow G_2$ such that $f(K_1)\leq K_2$, $f(H_1)\leq H_2$, and $f(E_1)\leq E_2$.
\end{defi}
These conditions gives us three group homomorphisms, namely $f|_{K_1}\colon K_1\longrightarrow K_2$, $f|_{H_1}\colon H_1\longrightarrow H_2$, $f|_{E_1}\colon E_1\longrightarrow E_2$, which are the restrictions of $f$ as group homomorphism. We prove that $f|_{K_1}$ and $f|_{H_1}$ are closely related.
\begin{lemma}\label{lemma-Morph-K-H}
Let $f\colon (G_1,K_1,H_1,E_1)\longrightarrow (G_2,K_2,H_2,E_2)$ be a trifactorised group morphism. Let $\sigma_i\colon H_i\longrightarrow K_i$ for $i=1$, $2$ be the corresponding bijective derivations. Then
\begin{enumerate}
\item $\sigma_2\circ f|_{H_1}=f|_{K_1}\circ\sigma_1$.
\item Given $h_1,h_2\in H_1$, $f(h_1)=f(h_2)$ if and only if $f(\sigma_1(h_1))=f(\sigma_1(h_2))$.
\end{enumerate}
\end{lemma}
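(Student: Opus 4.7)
The plan is to recognise that the maps $\sigma_i$ are nothing but the ``$K$-component'' of the unique $K_iE_i$-factorisation, restricted to $H_i$, and then exploit the three inclusions $f(K_1)\le K_2$, $f(H_1)\le H_2$, $f(E_1)\le E_2$ provided by the morphism axioms, together with the uniqueness of the $K_2E_2$-factorisation in $G_2$, to obtain both statements almost for free.

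For part~(i), I would fix $h\in H_1$ and write $h=\sigma_1(h)\,e_h$ with $\sigma_1(h)\in K_1$ and $e_h\in E_1$, which is exactly the decomposition used to define $\sigma_1$ in Proposition~\ref{prop-BrAss3fact}. Applying the group homomorphism $f$ gives $f(h)=f(\sigma_1(h))\,f(e_h)$, where the morphism conditions ensure $f(\sigma_1(h))\in K_2$ and $f(e_h)\in E_2$; since $f(h)\in H_2$ and $K_2\cap E_2=1$ forces the $K_2E_2$-decomposition in $G_2$ to be unique, the $K_2$-part of $f(h)$ must be $\sigma_2(f(h))$, yielding $\sigma_2(f(h))=f(\sigma_1(h))$, i.e.\ the identity $\sigma_2\circ f|_{H_1}=f|_{K_1}\circ\sigma_1$.

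For part~(ii), I would deduce both implications from part~(i). The forward direction is immediate: applying $\sigma_2$ to an equality $f(h_1)=f(h_2)$ and using part~(i) yields $f(\sigma_1(h_1))=f(\sigma_1(h_2))$. For the converse, if $f(\sigma_1(h_1))=f(\sigma_1(h_2))$, then part~(i) gives $\sigma_2(f(h_1))=\sigma_2(f(h_2))$, and since $\sigma_2$ is a bijection (Proposition~\ref{prop-BrAss3fact}) we conclude $f(h_1)=f(h_2)$.

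I do not expect a genuine obstacle here. The whole argument reduces to the uniqueness of the $KE$-factorisation in $G_2$ and the bijectivity of $\sigma_2$; the only conceptual point, which makes the lemma essentially tautological once noticed, is to realise that $\sigma_1(h)$ is \emph{literally} the $K_1$-component of $h$ regarded as an element of $G_1=K_1E_1$, so that functoriality of the projection onto $K$ does all the work.
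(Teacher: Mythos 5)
Your proof is correct and follows essentially the same route as the paper: part~(i) by applying $f$ to the unique $K_1E_1$-factorisation of $h\in H_1$ and invoking uniqueness of the $K_2E_2$-factorisation of $f(h)$, and part~(ii) by combining part~(i) with the bijectivity of $\sigma_2$. No gaps.
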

\begin{proof}\begin{enumerate}
\item We use the notation on Proposition \ref{prop-BrAss3fact}. Statement~(1) follows directly from the fact the $f(h)$ is the product $f(k_h)f(e_h)$, where $f(k_h)\in K_2$, $f(e_h)\in E_2$, for all $h \in H$. Therefore, $\sigma_2(f(h))=f(k_h)=f({\sigma_1(h)})$, for all $h \in H$.
\item We have that $f(h_1)=f(h_2)$ if and only if $\sigma_2(f(h_1))=\sigma_2(f(h_2))$, equivalently, $f(\sigma_1(h_1))=f(\sigma_1(h_2))$.\qedhere
\end{enumerate}
\end{proof}

Furthermore, the injectivity, surjectivity and bijectivity of $f|_{K_1}$, $f|_{H_1}$, $f|_{E_1}$, and $f$ are also closely related.

\begin{prop}\label{prop-3factMorphMonoEpiIso}
Let $f\colon (G_1,K_1,H_1,E_1)\longrightarrow (G_2,K_2,H_2,E_2)$ be a morphism, then:
\begin{enumerate}
\item If $f$ is a monomorphism (resp., epimorphism, isomorphism) then $f|_{K_1}$, $f|_{H_1}$, and $f|_{E_1}$ are monomorphisms (resp., epimorphisms, isomorphisms).
\item  $f|_{K_1}$ is a monomorphism (resp., epimorphism, isomorphism) if and only if $f|_{H_1}$ is a monomorphism (resp., epimorphism, isomorphism).
\item If $f|_{K_1}$ is an epimorphism, then $f$ is a epimorphism.\label{prop-3factMorphEpi}
\item If $f|_{K_1}$ and $f|_{E_1}$ are monomorphisms, then $f$ is a monomorphism.\label{prop-3factMorphMono}
\item If $f|_{K_1}$ and $f|_{E_1}$ are isomorphisms, then $f$ is a isomorphism. \label{prop-3factMorphIso}
\end{enumerate}
\end{prop}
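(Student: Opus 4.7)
The plan is to exploit the two unique-factorisation conditions $G_i = K_iE_i$ with $K_i\cap E_i = 1$ and $G_i = H_iE_i$ with $H_i\cap E_i = 1$, together with the third factorisation $G_i = K_iH_i$, and to invoke Lemma~\ref{lemma-Morph-K-H}(1) saying that $f|_{K_1}\circ\sigma_1 = \sigma_2\circ f|_{H_1}$, so that the bijective derivations conjugate $f|_{H_1}$ to $f|_{K_1}$.

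For part~(1), monomorphisms are trivial since restrictions of injective maps are injective. For the epimorphism case I would pick $k_2\in K_2$, lift to some $g_1\in G_1$ with $f(g_1)=k_2$, and write $g_1 = k_1e_1$ with $k_1\in K_1$, $e_1\in E_1$; then $f(k_1)f(e_1)=k_2$ gives $f(e_1)=f(k_1)^{-1}k_2\in K_2\cap E_2 = 1$, forcing $f(k_1)=k_2$. The same argument starting from $e_2\in E_2$ gives surjectivity of $f|_{E_1}$, and an analogous argument with the factorisation $G_1 = H_1E_1$, applied to a preimage of $h_2\in H_2$, yields $f|_{H_1}$ surjective by the uniqueness of the $H_2E_2$ decomposition. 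The isomorphism case is mono plus epi.

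Part~(2) is one line from Lemma~\ref{lemma-Morph-K-H}(1): since $\sigma_1,\sigma_2$ are bijections and $f|_{K_1}\circ\sigma_1 = \sigma_2\circ f|_{H_1}$, injectivity, surjectivity, and bijectivity transfer between $f|_{K_1}$ and $f|_{H_1}$. For part~(\ref{prop-3factMorphEpi}), given $g_2\in G_2$ I would use $G_2=K_2H_2$ to write $g_2 = k_2h_2$, lift $k_2$ along $f|_{K_1}$ (epi by hypothesis) and $h_2$ along $f|_{H_1}$ (epi by part~(2)) to get $k_1\in K_1$ and $h_1\in H_1$; then $f(k_1h_1)=g_2$.

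For part~(\ref{prop-3factMorphMono}), take $g_1\in\ker f$, decompose $g_1 = k_1e_1$, and note that $f(k_1)f(e_1)=1$ forces $f(e_1) = f(k_1)^{-1}\in K_2\cap E_2 = 1$, so $f(k_1)=f(e_1)=1$; injectivity of the two restrictions then gives $k_1=e_1=1$. Part~(\ref{prop-3factMorphIso}) is immediate from parts~(\ref{prop-3factMorphEpi}) and~(\ref{prop-3factMorphMono}). I expect no serious obstacle; the only delicate choice is to use the $H_1E_1$ factorisation (not the $K_1H_1$ one, which is not unique) when proving surjectivity of $f|_{H_1}$ in part~(1), so that uniqueness of the $H_2E_2$ decomposition can be applied.
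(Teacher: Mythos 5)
Your proposal is correct and follows essentially the same route as the paper: restriction of injectivity for the monomorphism half of (1), the $K_2\cap E_2=1$ (resp.\ $H_2\cap E_2=1$) trick for surjectivity of the restrictions, Lemma~\ref{lemma-Morph-K-H} for (2), the $G_2=K_2H_2$ factorisation for (3), and the unique $K_1E_1$ decomposition for (4). Your explicit remark that one must use the $H_1E_1$ factorisation (rather than $K_1H_1$) when lifting elements of $H_2$ is a point the paper leaves implicit under ``we argue analogously'', but it is the same argument.
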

\begin{proof}
\begin{enumerate}
\item If $f$ is injective obviously its restrictions are injective, so we only need to prove it for surjectivity and the result for bijectivity will follow. Suppose that $f$ is an epimorphism and let $k\in K_2$. There exists $g\in G_1$ such that $f(g)=k$. Then $g=k_1e_1$ for $k_1\in K_1$ and $e_1\in E_1$, hence $f(k_1)f(e_1)=k$. Therefore, $f(e_1)=f(k_1^{-1})k\in K_2\cap E_2=1$. Consequently, $f(k_1)=k$ and $f|_{K_1}$ is an epimorphism. We argue analogously for $H_1$ and $E_1$.
\item This follows from Lemma \ref{lemma-Morph-K-H}.
\item If $f|_{K_1}$ is an epimorphism, then $f|_{H_1}$ is an epimorphism too. Since $G_2=K_2H_2$, the result follows.
\item Let $g_1$, $g_2\in G_1$ such that $f(g_1)=f(g_2)$. Since $G_1=K_1E_1$ there exist $k_1$, $k_2\in K_1$ and $e_1$, $e_2\in E_1$ such that $g_1=k_1e_1$ and $g_2=k_2e_2$. Then $f(k_2^{-1}k_1)=f(e_2e_1^{-1})\in K_2\cap E_2=1$. Hence $f(k_1)=f(k_2)$ and $f(e_1)=f(e_2)$. By hypothesis, we have that $k_1=k_2$ and $e_1=e_2$. Thus, $g_1=g_2$ and $f$ is a monomorphism.
\item It follows from Statements~(\ref{prop-3factMorphEpi}) and~(\ref{prop-3factMorphMono}).\qedhere
\end{enumerate}
\end{proof}
If $f|_{K_1}$ is a monomorphism, then $f$ is not a monomorphism in general. 
\begin{ex}
Consider $G=\gen{x}\times \gen{y}$ with $x^n=y^n=1$ for some $n\in\N$, then $(G,\gen{x},\gen{xy},\gen{y})$ and $(\gen{x},\gen{x},\gen{x},1)$ are trifactorised groups. The projection $\pi$ from $G$ to $\gen{x}$ is a morphism of trifactorised groups such that $\pi|_{\gen{x}}$ and $\pi|_{\gen{xy}}$ are monomorphisms, but $\pi$ is not a monomorphism.
\end{ex} 

\begin{prop}\label{prop-BrMorphAss}
Let $f\colon (G_1,K_1,H_1,E_1)\longrightarrow (G_2,K_2,H_2,E_2)$ be a trifactorised group morphism, then  $\B(f)=f|_{K_1}$ can be viewed as a brace morphism from $\B(G_1,K_1,H_1,E_1)$ to $\B(G_2,K_2,H_2,E_2)$.
\end{prop}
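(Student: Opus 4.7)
The plan is to verify separately that $f|_{K_1}$ respects each of the two brace operations on $\B(G_i,K_i,H_i,E_i)$. Recall that, on the underlying set $K_i$, the additive operation is just the group operation inherited from $K_i$, while the multiplicative operation is $k_1\boxdot k_2 = k_1\,(\conj{e_{k_1}}{k_2})$, where for any $k\in K_i$ the element $e_k\in E_i$ is characterised by $\sigma_i^{-1}(k)=k\cdot e_k$ (i.e., $e_k$ is the $E_i$-part of the unique element of $H_i$ whose $K_i$-part is $k$).

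The additive compatibility is immediate: because $f$ is a group homomorphism and $f(K_1)\le K_2$ by assumption, the restriction $f|_{K_1}\colon (K_1,\cdot)\to (K_2,\cdot)$ is a group homomorphism between the additive groups of the two associated braces.

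The heart of the argument is therefore to show that $f$ transports the $E$-marker $e_k$ correctly, namely that $f(e_{k_1})=e_{f(k_1)}$ for every $k_1\in K_1$. For this I would apply Lemma~\ref{lemma-Morph-K-H}(1): writing $h_1:=\sigma_1^{-1}(k_1)\in H_1$, so that $h_1=k_1\cdot e_{k_1}$, the lemma gives $\sigma_2(f(h_1))=f(\sigma_1(h_1))=f(k_1)$. Since $f(h_1)\in H_2$ (because $f(H_1)\le H_2$), this says exactly that $f(h_1)=\sigma_2^{-1}(f(k_1))=f(k_1)\cdot e_{f(k_1)}$. On the other hand, applying $f$ to $h_1=k_1\cdot e_{k_1}$ yields $f(h_1)=f(k_1)\cdot f(e_{k_1})$, with $f(e_{k_1})\in E_2$ because $f(E_1)\le E_2$. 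Uniqueness of the $K_2 E_2$-decomposition in $G_2$ then forces $f(e_{k_1})=e_{f(k_1)}$.

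Given this identity, the multiplicative compatibility is a short computation: for $k_1,k_2\in K_1$,
\[
f(k_1\boxdot k_2)=f(k_1)\cdot f(\conj{e_{k_1}}{k_2})=f(k_1)\cdot\conj{f(e_{k_1})}{f(k_2)}=f(k_1)\cdot\conj{e_{f(k_1)}}{f(k_2)}=f(k_1)\boxdot f(k_2),
\]
so $f|_{K_1}$ is a brace homomorphism. The only step that is not entirely formal is the identification $f(e_{k_1})=e_{f(k_1)}$, but since Lemma~\ref{lemma-Morph-K-H} has already aligned $\sigma_1$ with $\sigma_2$ via $f$, this is not a serious obstacle.
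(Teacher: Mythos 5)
Your proposal is correct and follows essentially the same route as the paper, whose entire proof is the one-line observation that the claim follows from Lemma~\ref{lemma-Morph-K-H} because $f$ is compatible with the derivations; your argument simply unpacks that remark, using Lemma~\ref{lemma-Morph-K-H}(1) together with uniqueness of the $K_2E_2$-decomposition to get $f(e_{k_1})=e_{f(k_1)}$ and then verifying compatibility with $\boxdot$ directly.
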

\begin{proof}
This follows directly from Lemma \ref{lemma-Morph-K-H} because $f$ is compatible with the derivations.
\end{proof}
\begin{remark}
If we use the language of categories, Proposition \ref{prop-BrMorphAss} shows that $\B\colon \textbf{3factGrp}\longrightarrow \textbf{SKB}$ is a functor. 
If $\widehat {\mathsf{B}}$ is the restriction of $\mathsf{B}$ to the category $\mathbf{L3factGrp}$ of large trifactorised groups, then it is easy to see that $\mathsf{L}\circ \widehat{\mathsf{B}}\colon \mathbf{L3factGrp}\longrightarrow \mathbf{L3factGrp}$ and $\widehat{\mathsf{B}}\circ \mathsf{L}\colon \mathbf{SKB}\longrightarrow \mathbf{SKB}$ are the identity functors. That is, $\mathbf{L3factGrp}$ and $\mathbf{SKB}$ are equivalent categories.
Proposition \ref{prop-3factMorphMonoEpiIso} shows that $\B$ preserves monomorphisms, epimorphisms and isomorphisms and reflects epimorphisms. Proposition \ref{prop-Br3factEquiv} shows that $\B$ is essentially surjective. Example \ref{ex-C2-A5} shows that $\B$ is not surjective in morphisms.
\end{remark}

We present a sufficient and necessary condition on a brace homomorphism $f$ for the existence of a trifactorised group morphism $\bar f$ such that $\B({\bar f})=f$.
\begin{prop}\label{prop-Brmorph-3fact-equiv-cond}
Let $f\colon (G_1,K_1,H_1,E_1)\longrightarrow (G_2,K_2,H_2,E_2)$ be a trifactorised group morphism. Let $\B(G_i,K_i,H_i,E_i)=(K_i,\cdot,\boxdot)$. Consider $\eta_i\colon(K_i,\boxdot)\longrightarrow E_i$ the epimorphism that constructs the trifactorised group $(G_i,K_i,H_i,E_i)$ for $i=1$, $2$. Then
\begin{equation}\label{eq-Brmorph-3fact-equiv-cond}
\B(f)(\ker\eta_1)\leq \ker\eta_2.
\end{equation}
\end{prop}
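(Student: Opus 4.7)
The plan is to give an intrinsic description of $\ker\eta_i$ inside the ambient group $G_i$, and then read off the inclusion directly from the hypothesis that $f$ preserves the triple factorisation. I expect no serious obstacle here, since the data of a morphism of trifactorised groups forces each of the three distinguished subgroups to be respected.

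First I would unpack the construction of $\eta_i$ recorded in the proof of Proposition~\ref{prop-Br3factEquiv}(1): there it is shown that $\eta_i=\pi_{E_i}|_{H_i}\circ \sigma_i^{-1}$, where $\sigma_i\colon H_i\longrightarrow K_i$ is the bijective derivation of Proposition~\ref{prop-BrAss3fact} and $\pi_{E_i}\colon G_i\longrightarrow E_i$ is the natural projection associated with the factorisation $G_i=K_iE_i$. Since $\sigma_i^{-1}$ is a bijection and $\ker\pi_{E_i}=K_i$, this gives $\ker\eta_i = \sigma_i(K_i\cap H_i)$. On $K_i\cap H_i$ the derivation $\sigma_i$ acts as the identity: if $h\in K_i\cap H_i$, then in the unique decomposition $h=k_he_h$ one has $e_h=1$, and hence $\sigma_i(h)=k_h=h$. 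Consequently $\ker\eta_i=K_i\cap H_i$, viewed as a subset of~$K_i$.

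Second, I would combine this identification with the hypothesis that $f$ is a morphism of trifactorised groups. The conditions $f(K_1)\leq K_2$ and $f(H_1)\leq H_2$ immediately yield $f(K_1\cap H_1)\leq f(K_1)\cap f(H_1)\leq K_2\cap H_2$. Applying Proposition~\ref{prop-BrMorphAss}, which identifies $\B(f)$ with $f|_{K_1}$, this translates directly into
\[
\B(f)(\ker\eta_1) = f(K_1\cap H_1) \leq K_2\cap H_2 = \ker\eta_2,
\]
which is the desired inclusion.

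The only subtle point worth flagging is the identification of $\ker\eta_i$ with $K_i\cap H_i$, since $\eta_i$ is defined on the brace side $(K_i,\boxdot)$ while the inclusion to be proved naturally sits inside~$K_i$. That subtlety is dissolved by tracking, in the first step, how $(K_i,\boxdot)$ is identified with $H_i$ through $\sigma_i^{-1}$; once this is in hand the rest of the proof is a one-line diagram chase.
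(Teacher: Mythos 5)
Your proof is correct and follows essentially the same route as the paper's: both arguments rest on identifying $\ker\eta_i$ with $K_i\cap H_i$ (as established in the proof of Proposition~\ref{prop-Br3factEquiv}) and then invoking $f(K_1)\leq K_2$ and $f(H_1)\leq H_2$ to conclude. Your write-up is merely more explicit about why $\sigma_i$ restricts to the identity on $K_i\cap H_i$, a point the paper leaves implicit.
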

\begin{proof}
Let $k\in\ker\eta_1$, then $k\in K_1\cap H_1$, therefore $f(k)\in K_2\cap H_2$, and $\eta_2(f(k))=1$, which means that $\B(f)(k)\in\ker\eta_2$.
\end{proof}

If a brace homomorphism satisfies Equation~\eqref{eq-Brmorph-3fact-equiv-cond}, then it is the image of a trifactorised group morphism from $(G_1,K_1,H_1,E_1)$ to $(G_2,K_2,H_2,E_2)$.

\begin{prop}\label{prop-Brmorph-to-3factmorph}
For $i=1$, $2$ consider $(B_i,+,\cdot)$ a brace with additive group $K_i$, multiplicative group $C_i$, and lambda map $\lambda_i\colon C_i\longrightarrow\Aut(K_i)$. Consider $(G_i,K_i,H_i,E_i)$ a trifactorised group associated with $B_i$ with corresponding epimorphism $\eta_i\colon C_i\longrightarrow E_i$ for $i=1$, $2$. Let $f\colon B_1\longrightarrow B_2$ a brace morphism such that $f({\ker\eta_1})\leq\ker\eta_2$, then:
\begin{enumerate}
\item The map $\bar f\colon (G_1,K_1,H_1,E_1)\longrightarrow (G_2,K_2,H_2,E_2)$ given by \[\bar f({k\eta_1(c)})=f(k)\eta_2({f({c})})\] is a trifactorised group morphism.
\item $\B(\bar f)=f$.
\item If $f$ is an epimorphism, then $\bar f$ is an epimorphism.
\item $\bar f$ is a monomorphism (resp., isomorphism) if and only if $f$ is a monomorphism (resp., isomorphism), and $f({\ker\eta_1})=\ker\eta_2$.
\end{enumerate}
\end{prop}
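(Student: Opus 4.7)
The plan is to tackle the four assertions in sequence, leveraging the semidirect-product structure $G_i = K_i \rtimes E_i$ and the key identity $f(\lambda_1(c)(k)) = \lambda_2(f(c))(f(k))$, which is forced by $f$ being a brace morphism via $\lambda_c(k) = -c + ck$.

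For (1), I would first check that $\bar f$ is well defined: each $g \in G_1$ has a unique expression $g = k\,\eta_1(c)$ with $k \in K_1$, and although $c$ is determined only modulo $\ker\eta_1$, the hypothesis $f(\ker\eta_1) \leq \ker\eta_2$ ensures $\eta_2(f(c))$ does not depend on the chosen representative. The homomorphism property follows by expanding $(k_1\eta_1(c_1))(k_2\eta_1(c_2)) = k_1\,\lambda_1(c_1)(k_2)\,\eta_1(c_1 c_2)$, applying $\bar f$, and reducing the comparison to the lambda-compatibility identity just recalled. The inclusions $\bar f(K_1) \leq K_2$ and $\bar f(E_1) \leq E_2$ are direct from the definition, and $\bar f(H_1) \leq H_2$ follows since $\bar f(\delta_1(c)\eta_1(c)) = \delta_2(f(c))\eta_2(f(c))$, using that each $\delta_i$ is the identity map.

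Statements (2) and (3) are short: $\mathsf{B}(\bar f) = \bar f|_{K_1}$ coincides with $f$ by construction, and if $f$ is an epimorphism then $\bar f|_{K_1} = f$ is epi, so Proposition \ref{prop-3factMorphMonoEpiIso}(\ref{prop-3factMorphEpi}) gives $\bar f$ epi. For the ``if'' direction of (4), I would assume $f$ is mono and $f(\ker\eta_1) = \ker\eta_2$. The map $\bar f|_{E_1}\colon \eta_1(c) \mapsto \eta_2(f(c))$ is then injective: if $\eta_2(f(c)) = 1$, then $f(c) \in \ker\eta_2 = f(\ker\eta_1)$, so $c \in \ker\eta_1$ by the injectivity of $f$. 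Combined with $\bar f|_{K_1} = f$ mono, Proposition \ref{prop-3factMorphMonoEpiIso}(\ref{prop-3factMorphMono}) delivers $\bar f$ mono, and the iso version follows by combining this with (3).

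The ``only if'' direction is where I expect the main subtlety. Proposition \ref{prop-3factMorphMonoEpiIso}(1) immediately gives that $f = \bar f|_{K_1}$ inherits the mono (resp., iso) property from $\bar f$. Injectivity of $\bar f|_{E_1}$ translates into $\ker\eta_1 = (f|_{C_1})^{-1}(\ker\eta_2)$, which combined with $f(\ker\eta_1) \leq \ker\eta_2$ yields $f(\ker\eta_1) = \ker\eta_2 \cap f(C_1)$. The main obstacle will be upgrading this to the full equality $f(\ker\eta_1) = \ker\eta_2$: in the iso case, surjectivity of $f$ forces $f(C_1) = C_2$ and the equality is automatic, whereas in the mono case one must extract the containment $\ker\eta_2 \leq f(C_1)$ by exploiting the structural information encoded in $\bar f$ being a trifactorised-group morphism, most naturally via a diagram chase through the uniquely determined decompositions in $G_i = K_i E_i$.
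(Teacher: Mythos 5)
Your treatment of parts (1)--(3) and of the ``if'' direction of (4) matches the paper's argument step for step: well-definedness from $f(\ker\eta_1)\leq\ker\eta_2$, the homomorphism property reduced to the compatibility $f((\lambda_1)_c(k))=(\lambda_2)_{f(c)}(f(k))$, the subgroup inclusions read off from the formula for $\bar f$, the identity $\B(\bar f)=\bar f|_{K_1}=f$, and the transfers of epi/mono via Proposition~\ref{prop-3factMorphMonoEpiIso}. Up to that point the proposal is correct and essentially identical to the paper.

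The gap is exactly where you place it, but it cannot be closed the way you hope. From injectivity of $\bar f$ you correctly obtain only $f^{-1}(\ker\eta_2)\leq\ker\eta_1$, hence $f(\ker\eta_1)=\ker\eta_2\cap f(C_1)$; the further containment $\ker\eta_2\leq f(C_1)$ that you propose to ``extract by a diagram chase'' is simply not available when $f$ is not surjective, and in fact the ``only if'' direction of (4) fails in the monomorphism case. Concretely: let $B_1$ be the trivial brace on $\langle a\rangle\cong C_2$ with $\eta_1=\id$ (so $\ker\eta_1=1$), let $B_2$ be the trivial brace on $\langle a\rangle\times\langle b\rangle$ with $\eta_2$ the quotient by $\langle b\rangle$ (legitimate, since $\ker\lambda_2$ is everything), and let $f(a)=(a,1)$. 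Then $\bar f|_{K_1}=f$ and $\bar f|_{E_1}$ are both injective, so $\bar f$ is a monomorphism by Proposition~\ref{prop-3factMorphMonoEpiIso}~(\ref{prop-3factMorphMono}), yet $f(\ker\eta_1)=1\neq\langle b\rangle=\ker\eta_2$. The correct necessary and sufficient condition is $f$ mono together with $f^{-1}(\ker\eta_2)=\ker\eta_1$ (equivalently, $\bar f|_{E_1}$ injective), which coincides with $f(\ker\eta_1)=\ker\eta_2$ only when $f$ is surjective --- so the isomorphism half of (4) is fine, as you note. Be aware that the paper's own proof makes the same leap: having shown $f^{-1}(\ker\eta_2)\subseteq\ker\eta_1$, it concludes ``therefore $\ker\eta_2\leq f(\ker\eta_1)$'', which again presumes surjectivity. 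Your instinct that this is the delicate point was right; the remedy is to weaken the stated condition, not to find a cleverer chase.
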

\begin{proof}
Since $f$ satisfies Equation~\eqref{eq-Brmorph-3fact-equiv-cond}, $\bar f$ is well defined. Obviously, the restrictions of $\bar f$ to $K_1$ and $E_1$ are group homomorphisms, $\bar f(k\eta_1(c))=\bar f(k)\bar f({\eta_1(c)})$, and
$\bar f(\conj{\eta_1(c)}{k}))=\bar f({(\lambda_1)_c(k)})=f({{(\lambda_1)}_c(k)})={(\lambda_2)}_{f(c)}(f(k))=\conj{\eta_2(f(c))}{f(k)}=\conj{\bar f(\eta_1(c))}{\bar f(k)}$ for all $k\in K_1$ and $c\in C_1$. This proves that $\bar f$ is a trifactorised group morphism. Moreover, $\bar f$ restricted to $K_1$ coincides with $f$, therefore $\B(\bar f)=f$. Proposition \ref{prop-3factMorphMonoEpiIso} shows that if $\B({\bar f})=f$ is an epimorphism, then $\bar f$ is an epimorphism.

Suppose that $\bar f$ is a monomorphism. Then $\B(\bar f)=f$ is a monomorphism by Proposition~\ref{prop-3factMorphMonoEpiIso}. Let $c\in f^{-1}({\ker\eta_2})$, then $\bar f(\eta_1(c))=\eta_2(f(c))=1=\bar f(1)$. Since $\bar f$ is a monomorphism, $\eta_1(c)=1$ and $c\in\ker\eta_2$. Therefore, $\ker\eta_2\leq f(\ker\eta_1)$ and the other inclusion is given by hypothesis. Now suppose that $f$ is a monomorphism and $f(\ker\eta_1)=\ker\eta_2$. Let $c\in C_1$ such that $\eta_2(f(c))=\bar f({\eta_1(c)})=1$. We have that $f(c)\in\ker\eta_2=f(\ker\eta_1)$. Since $f$ is a monomorphism, it follows that $c\in\ker\eta_1$. This means that $\bar f|_{E_1}$ is a monomorphism. By hypothesis $\bar f|_{K_1}=f$ is a monomorphism. Therefore, by Proposition \ref{prop-3factMorphMonoEpiIso} $\bar f$ is a monomorphism.
\end{proof}

As we have seen in Proposition \ref{prop-Brmorph-3fact-equiv-cond}, given a trifactorised group morphism $f\colon (G_1,K_1,H_1,E_1)\longrightarrow (G_2,K_2,H_2,E_2)$, then $\B(f)$ satisfies the hypothesis of Proposition \ref{prop-Brmorph-to-3factmorph}, thus $\B(f)$ induces a trifactorised group morphism $\ol{\B(f)}\colon(G_1,K_1,H_1,E_1)\longrightarrow (G_2,K_2,H_2,E_2)$.
\begin{prop}\label{prop-Brmorph-to-3factmorph-bij}
Let $f\colon (G_1,K_1,H_1,E_1)\longrightarrow (G_2,K_2,H_2,E_2)$ be a trifactorised group morphism. Then $\ol{\B(f)}=f$.
\end{prop}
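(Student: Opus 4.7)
The plan is to check that $\ol{\B(f)}$ and $f$ agree pointwise on $G_1$, reducing the problem to a single naturality identity for the epimorphisms $\eta_i$. I would fix an arbitrary $g \in G_1$ and, using $G_1 = K_1 E_1$ together with the surjectivity of $\eta_1 \colon C_1 \longrightarrow E_1$, write $g = k\eta_1(c)$ with $k \in K_1$ and $c \in C_1$. The formula defining $\ol{\B(f)}$ in Proposition~\ref{prop-Brmorph-to-3factmorph} gives $\ol{\B(f)}(g) = f(k)\eta_2(f(c))$, and since $f$ is a group homomorphism, $f(g) = f(k)f(\eta_1(c))$. Therefore the proposition reduces to the identity $\eta_2\circ f|_{K_1} = f|_{E_1}\circ \eta_1$ on $C_1$.

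To establish this identity I would combine two intertwining relations. Lemma~\ref{lemma-Morph-K-H}(1) yields $\sigma_2\circ f|_{H_1} = f|_{K_1}\circ \sigma_1$, and hence $\sigma_2^{-1}\circ f|_{K_1} = f|_{H_1}\circ \sigma_1^{-1}$. Independently, the uniqueness of the factorisation $g = k_g e_g$ together with the inclusions $f(K_1)\le K_2$ and $f(E_1)\le E_2$ forces $\pi_{E_2}\circ f = f|_{E_1}\circ \pi_{E_1}$ as maps $G_1\longrightarrow E_2$. Recalling from Proposition~\ref{prop-Br3factEquiv} that $\eta_i = \pi_{E_i}|_{H_i}\circ \sigma_i^{-1}$ and composing gives
\[
\eta_2\circ f|_{K_1} = \pi_{E_2}|_{H_2}\circ \sigma_2^{-1}\circ f|_{K_1} = \pi_{E_2}|_{H_2}\circ f|_{H_1}\circ \sigma_1^{-1} = f|_{E_1}\circ \eta_1,
\]
which is exactly the required compatibility.

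No real obstacle is expected; the computation is essentially bookkeeping. The main thing to keep in mind is that the underlying set of the multiplicative group $C_1$ coincides with $K_1$, so that $\B(f) = f|_{K_1}$ is the same map regardless of which group structure one places on the source, making the substitution $\eta_2(f(c)) = f(\eta_1(c))$ legitimate. The well-definedness hypothesis needed to apply Proposition~\ref{prop-Brmorph-to-3factmorph} to $\B(f)$ is automatic by Proposition~\ref{prop-Brmorph-3fact-equiv-cond}.
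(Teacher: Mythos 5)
Your proof is correct and follows essentially the same route as the paper: both reduce the claim to the compatibility $\eta_2(f(k))=f(\eta_1(k))$ and extract it from $f(H_1)\le H_2$ together with the uniqueness of the $K$--$E$ factorisation. The only cosmetic difference is that you package that last step as a composition of the intertwining identities $\sigma_2\circ f|_{H_1}=f|_{K_1}\circ\sigma_1$ and $\pi_{E_2}\circ f=f|_{E_1}\circ\pi_{E_1}$, whereas the paper observes directly that $f(k)f(\eta_1(k))\in H_2$ forces $f(\eta_1(k))=\eta_2(f(k))$.
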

\begin{proof}
Let $\B(f)\colon(K_1,\cdot,\boxdot)\longrightarrow (K_2,\cdot,\boxdot)$. Obviously $\ol{\B(f)}|_{K_1}=f|_{K_1}$. For $i=1$, $2$, let $\eta_i\colon(K_i,\boxdot)\longrightarrow E_i$ be the epimorphism defining the trifactorised group $(G_i,K_i,H_i,E_i)$. Then $\ol{\B(f)}(\eta_1(k))=\eta_2({\B(f)(k)})=\eta_2(f(k))$. Since $f(k\eta_1(k))=f(k)f(\eta_1(k))\in H_2$, it follows that $\eta_2(f(k))=f({\eta_1(k)})$. Therefore, $\ol{\B(f)}|_{E_1}=f|_{E_1}$. Hence $\ol{\B(f)}=f$.
\end{proof}

Note that Proposition \ref{prop-Brmorph-to-3factmorph} extends the idea of Section \ref{sec-large-small} of associating to a brace homomorphism a morphism between the large or small trifactorised groups. 
\begin{cor}\label{cor-ext-Brmorph-large-small}
Let $f\colon (B_1,+,\cdot)\longrightarrow (B_2,+,\cdot)$ be a brace homomorphism. Let $C_i$ the multiplicative group, $K_i$ the additive group, and $\lambda_i\colon C_i\longrightarrow\Aut(K_i)$ the lambda map of $(B_i+,\cdot)$ for $i=1$, $2$. Then:
\begin{enumerate}
\item $f$ extends to a trifactorised group morphism between the large trifactorised groups. 
\item $f$ extends to a trifactorised group morphism between the small trifactorised groups if and only if $f(\ker\lambda_1)\leq\ker\lambda_2$. In particular, brace epimorphisms extend to small trifactorised groups morphisms.
\end{enumerate}
\end{cor}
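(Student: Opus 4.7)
The plan is to reduce the entire statement to an application of Proposition~\ref{prop-Brmorph-to-3factmorph} after pinning down, in each case, the defining epimorphism $\eta_i\colon C_i\longrightarrow E_i$ that presents the given trifactorised group as a generalised trifactorised group associated with $B_i$. For the large trifactorised group $\mathsf{L}(B_i)=([K_i]C_i,K_i,D_i,C_i)$ the ``$E_i$'' piece is $C_i$ itself acting by $\lambda_i$, so the epimorphism is just $\eta_i=\id_{C_i}$ and therefore $\ker\eta_i=1$. For the small trifactorised group $\mathsf{S}(B_i)=(S_i,K_i,H_i,E_i)$ with $E_i=\{(0,\lambda_b)\mid b\in B_i\}$, the natural presentation uses $\eta_i=\lambda_i\colon C_i\longrightarrow E_i$, under the identification of $E_i$ with $\lambda_i(C_i)$; hence $\ker\eta_i=\ker\lambda_i$.

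With these identifications, part~(1) is immediate: the hypothesis $f(\ker\eta_1)\le\ker\eta_2$ of Proposition~\ref{prop-Brmorph-to-3factmorph} holds trivially because $\ker\eta_1=1$, so $f$ extends to a morphism $\bar f\colon \mathsf{L}(B_1)\longrightarrow \mathsf{L}(B_2)$. For part~(2) the same hypothesis becomes precisely $f(\ker\lambda_1)\le\ker\lambda_2$. Sufficiency is then a direct invocation of Proposition~\ref{prop-Brmorph-to-3factmorph}, and necessity follows from Proposition~\ref{prop-Brmorph-3fact-equiv-cond} applied to any such extension.

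For the ``in particular'' clause I would argue by hand that a brace epimorphism automatically satisfies $f(\ker\lambda_1)\le\ker\lambda_2$. Indeed, if $c\in\ker\lambda_1$ then $\lambda_1(c)=\id$ translates to $cb=c+b$ for every $b\in B_1$; applying the brace homomorphism $f$ gives $f(c)f(b)=f(c)+f(b)$, and since $f$ is surjective onto $B_2$ this identity holds with $f(b)$ replaced by an arbitrary $b'\in B_2$, yielding $\lambda_2(f(c))(b')=-f(c)+f(c)b'=b'$, i.e.\ $f(c)\in\ker\lambda_2$.

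The proof is not really obstructed by any difficult step; the only point that requires a little care is the bookkeeping identification of the defining epimorphism $\eta$ of the small trifactorised group with the lambda map, which is what turns the abstract condition $f(\ker\eta_1)\le\ker\eta_2$ into the clean criterion $f(\ker\lambda_1)\le\ker\lambda_2$ and makes the epimorphism case fall out automatically.
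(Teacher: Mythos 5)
Your proposal is correct and follows the paper's own route: the paper proves the corollary simply by invoking Proposition~\ref{prop-Brmorph-to-3factmorph}, and your identifications $\ker\eta_i=1$ for $\mathsf{L}(B_i)$ and $\ker\eta_i=\ker\lambda_i$ for $\mathsf{S}(B_i)$ (consistent with Proposition~\ref{prop-iso-large-small}), together with Proposition~\ref{prop-Brmorph-3fact-equiv-cond} for necessity, are exactly the details the paper leaves implicit. Your hand verification that brace epimorphisms satisfy $f(\ker\lambda_1)\leq\ker\lambda_2$ is also correct and fills in the ``in particular'' clause appropriately.
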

\begin{proof}
It follows from Proposition \ref{prop-Brmorph-to-3factmorph}.
\end{proof}

\section{Isomorphism classes of the associated generalised trifactorised groups}\label{sec-Ass-3fact-IsoClass}
All the trifactorised groups associated with a brace $(B,+,\cdot)$ are constructed using group epimorphisms whose kernel contained in $\ker\lambda$. It is natural then to think that there is a relationship between two of them.
\begin{prop}\label{prop-3factAss-exists-epi}
Let $(B,+,\cdot)$ be a brace with additive group $K$, multiplicative group $C$, and lambda map $\lambda\colon C\longrightarrow\Aut(K)$. Consider $\eta_i\colon C\longrightarrow E_i$ a group epimorphism such that $\ker\eta_i\leq\ker\lambda$ with $(G_i,K,H_i,E_i)$ its corresponding trifactorised group associated with $(B,+,\cdot)$ for $i=1$, $2$.
\begin{enumerate}
\item If $\ker\eta_1\leq \ker\eta_2$, then there exists a trifactorised group epimorphism $f\colon (G_1,K,H_1,E_1)\longrightarrow (G_2,K,H_2,E_2)$ such that $f|_{K}$ is an isomorphism.
\item If $\ker\eta_1=\ker\eta_2$, then $(G_1,K,H_1,E_1)\cong (G_2,K,H_2,E_2)$.
\end{enumerate}
\end{prop}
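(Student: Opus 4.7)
The plan is to recognise both statements as immediate consequences of Proposition~\ref{prop-Brmorph-to-3factmorph} applied to the identity brace homomorphism $\mathrm{id}_B \colon B \longrightarrow B$, viewed as a morphism between two distinct presentations of the same brace $B$ as a trifactorised group (the first via $\eta_1$, the second via $\eta_2$).

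For (1), the hypothesis $\ker\eta_1 \leq \ker\eta_2$ is precisely the condition $\mathrm{id}_B(\ker\eta_1) \leq \ker\eta_2$ needed by Proposition~\ref{prop-Brmorph-to-3factmorph}. That proposition therefore supplies a trifactorised group morphism $\bar f \colon (G_1, K, H_1, E_1) \longrightarrow (G_2, K, H_2, E_2)$ whose restriction to $K$ coincides with $\mathrm{id}_K$ and is in particular an isomorphism. Since $\mathrm{id}_B$ is an epimorphism of braces, Proposition~\ref{prop-Brmorph-to-3factmorph}(3) forces $\bar f$ to be an epimorphism. For (2), under the stronger assumption $\ker\eta_1 = \ker\eta_2$ the same construction applies; now $\mathrm{id}_B(\ker\eta_1) = \ker\eta_1 = \ker\eta_2$ and $\mathrm{id}_B$ is itself an isomorphism, so Proposition~\ref{prop-Brmorph-to-3factmorph}(4) upgrades $\bar f$ to an isomorphism of trifactorised groups.

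There is essentially no obstacle, because Proposition~\ref{prop-Brmorph-to-3factmorph} has been crafted precisely to encapsulate this kind of reduction. A concrete alternative, should one prefer to avoid invoking that proposition, is to use the universal property of quotients: since $\ker\eta_1 \leq \ker\eta_2$, there is a unique epimorphism $\bar\eta \colon E_1 \longrightarrow E_2$ satisfying $\bar\eta \circ \eta_1 = \eta_2$; because both semidirect product structures use the same lambda action of $C$ on $K$, the assignment $(k,e) \longmapsto (k, \bar\eta(e))$ defines a group homomorphism $G_1 \longrightarrow G_2$ that restricts to $\mathrm{id}_K$ on $K$ and to $\bar\eta$ on $E_1$, and sends $H_1 = \{\delta(c)\eta_1(c) \mid c \in C\}$ onto $H_2 = \{\delta(c)\eta_2(c) \mid c \in C\}$. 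The surjectivity claim in (1) then follows from the surjectivity of $\bar\eta$, while (2) follows because $\bar\eta$ is an isomorphism whenever $\ker\eta_1 = \ker\eta_2$.
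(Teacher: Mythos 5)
Your proposal is correct and is essentially the paper's own argument: the paper constructs exactly the map $f(k\eta_1(c))=k\eta_2(c)$ and verifies by hand that it is a well-defined homomorphism sending $K$ to $K$, $H_1$ onto $H_2$ and $E_1$ onto $E_2$, which is precisely what Proposition~\ref{prop-Brmorph-to-3factmorph} applied to $\mathrm{id}_B$ yields (and is also your ``concrete alternative''). Invoking that earlier proposition is legitimate since it precedes this one and does not depend on it, so your packaging is just a cleaner citation of work the paper chose to redo explicitly.
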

\begin{proof}
Let $f\colon G_1\longrightarrow G_2$ given by $f(k\eta_1(c))=k\eta_2(c)$. Let $c_1,c_2\in C$ such that $\eta_1(c_1)=\eta_1(c_2)$, then $c_1^{-1}c_2\in\ker\eta_1\leq\ker\eta_2$. Hence, $\eta_2(c_1)=\eta_2(c_2)$ and $f$ is well defined.
If we denote by $\bar\lambda_i$ the action of $E_i$ on $K$ for $i=1$, $2$, we have that for all $k_1,k_2\in K$ and $c_1,c_2\in C$:
\begin{align*}
f({(k_1\eta_1(c_1))(k_2\eta_1(c_2))})&=f({({k_1+\bar\lambda_1({c_1})(k_2)})\eta_1(c_1c_2)})\\&=({k_1+\lambda_{c_1}(k_2)})\eta_2(c_1c_2).\\
f({k_1\eta_1(c_1)})=f({k_2\eta_1(c_2)})&=(k_1\eta_2(c_1))(k_2\eta_2(c_2))=({k_1+\bar\lambda_2({c_1})(k_2)})\eta_2(c_1c_2)\\&=({k_1+\lambda_{c_1}(k_2)})\eta_2(c_1c_2).
\end{align*}
Therefore, $f$ is a group homomorphism. Obviously, $f(K)=K$, $f(H_1)=H_2$, and $f(E_1)=E_2$, hence, $f$ is a trifactorised group morphism. By Proposition \ref{prop-3factMorphMonoEpiIso}, $f$ is surjective. Finally, $f$ restricted to $K$ is injective. Therefore, $f$ restricted to $K$ is an isomorphism. 

Moreover, it is easy to see that if $\ker\eta_1=\ker\eta_2$, then $f$ is a monomorphism, and therefore, $f$ is an isomorphism.\qedhere
\end{proof}

\begin{remark}
Proposition \ref{prop-3factAss-exists-epi} shows that all trifactorised groups associated with $(B,+,\cdot)$ are isomorphic to a trifactorised group of the form $(G,K,H,C/N)$, where $N$ is a normal subgroup of $C$ contained in $\ker\lambda$.
\end{remark}
 
\begin{prop}\label{prop-3factEpi-AssBr}
Let $f\colon (G_1,K_1,H_1,E_1)\longrightarrow (G_2,K_2,H_2,E_2)$ be an epimorphism of trifactorised groups such that restricted to $K_1$ is an isomorphism. Then $\B(G_1,K_1,H_1,E_1)$ and $\B(G_2,K_2,H_2,E_2)$ are isomorphic.
\end{prop}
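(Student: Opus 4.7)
The plan is to observe that the conclusion follows almost immediately from Proposition~\ref{prop-BrMorphAss}. That proposition already tells us that, for any morphism of trifactorised groups, the restriction $\B(f) = f|_{K_1}$ is itself a brace homomorphism from $\B(G_1,K_1,H_1,E_1)$ to $\B(G_2,K_2,H_2,E_2)$. The only extra ingredient needed here is that the present hypothesis forces $\B(f)$ to be bijective.

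First I would apply Proposition~\ref{prop-BrMorphAss} to obtain that $\B(f)=f|_{K_1}$ is a brace morphism between the associated braces. By the construction given in Proposition~\ref{prop-BrAss3fact}, the additive group of $\B(G_i,K_i,H_i,E_i)$ is precisely $K_i$ with its ambient group operation, so the hypothesis that $f|_{K_1}\colon K_1\longrightarrow K_2$ is a group isomorphism says exactly that $\B(f)$ is a bijection between the underlying sets of the two braces and preserves the additive operation. The preservation of the $\boxdot$ operation is already part of Proposition~\ref{prop-BrMorphAss}.

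Next I would note that a bijective brace homomorphism is automatically a brace isomorphism, since the set-theoretic inverse preserves both operations (the inverse of any bijective group homomorphism is again a group homomorphism, applied here to both $+$ and $\boxdot$). Hence $\B(f)$ witnesses an isomorphism $\B(G_1,K_1,H_1,E_1)\cong\B(G_2,K_2,H_2,E_2)$. There is no genuine obstacle; in fact, the epimorphism assumption on $f$ is not used in the argument, only that $f|_{K_1}$ is an isomorphism. The role of the proposition is thus to serve as a natural counterpart to Proposition~\ref{prop-3factAss-exists-epi}, recording that the associated-brace functor $\B$ cannot distinguish two trifactorised groups connected by a morphism whose additive component is already an isomorphism.
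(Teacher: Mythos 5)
Your proof is correct and follows essentially the same route as the paper, whose own proof is the one-line observation that $\B$ sends $f$ to the brace morphism $f|_{K_1}$ (Proposition~\ref{prop-BrMorphAss}), which is an isomorphism by hypothesis. Your expansion --- including the remark that the surjectivity of $f$ on all of $G_1$ is never used --- is accurate and, if anything, slightly more careful than the paper's phrasing.
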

\begin{proof}
This follows directly from the fact that $\B$ preserves isomorphisms.\qedhere
\end{proof}

Proposition \ref{prop-3factEpi-AssBr} shows that if two trifactorised groups are isomorphic, then they are associated with the same brace. This does not imply  that $\ker\eta_1=\ker\eta_2$, where $\eta_i$, $i=1$, $2$, are the epimorphisms defining the associated trifactorised groups. 

\begin{ex}
Consider $K_1=\gen{x}$, $K_2=\gen{y}$ where $x^2=y^2=1$. Let $B$ be the trivial brace of the group $K= K_1\times K_2$. Then $\lambda\colon K\longrightarrow\Aut(K)$ is trivial, therefore, $\ker\lambda=K$, so we can consider whichever epimorphism we want. Let $\eta_i\colon K_1\times K_2\longrightarrow K/K_i$ the natural epimorphism for $i=1$, $2$. It is obvious that $\ker\eta_1=K_1\neq K_2=\ker\eta_2$ and the trifactorised groups that they define are $(G_i,K,H_i,K/K_i)$, where $G_i=K_1\times K_2\times K/K_i$ for $i=1$, $2$. 

Let $f\colon (G_1,K,H_1,K/K_1)\longrightarrow (G_2,K,H_2,K/K_2)$ defined by
\begin{align*}
f(x,1,K_1)=(1,y,K_2),\; f(1,y,K_1)=(x,1,K_1),\; f(1,1,yK_1)=(1,1,xK_2).
\end{align*}
 It is easy to check that $f$ is a trifactorised group isomorphism. Therefore, $(G_1,K,H_1,K/K_1)\cong (G_2,K,H_2,K/K_2)$, but $\ker\eta_1\neq\ker\eta_2$.
\end{ex}

In the particular case of the large and small trifactorised groups the result is true.
\begin{prop}\label{prop-iso-large-small}
Let $(B,+,\cdot)$ be a brace with additive group $K$ and multiplicative group $C$. Let $(G,K,H,E)$ be a trifactorised group associated with $(B,+,\cdot)$ with corresponding epimorphism $\eta\colon C\longrightarrow E$. Then:
\begin{enumerate}
\item $(G,K,H,E)\cong \L(B,+,\cdot)$ if and only if $\ker\eta=1$.
\item $(G,K,H,E)\cong \S(B,+,\cdot)$ if and only if $\ker\eta=\ker\lambda$. 
\end{enumerate}
\end{prop}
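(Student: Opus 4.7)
The plan is to prove both equivalences by the same pattern: the sufficiency directions follow from a direct application of Proposition~\ref{prop-3factAss-exists-epi}(2), while the necessity directions are established by extracting a simple isomorphism invariant of the trifactorised group that determines $\ker\eta$.

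Recall that $\L(B)$ is built from the identity epimorphism $\id_{C}\colon C\longrightarrow C$, whose kernel is trivial, and that $\S(B)$ corresponds (up to isomorphism) to the epimorphism $\lambda\colon C\longrightarrow\lambda(C)$, whose kernel is $\ker\lambda$. Hence if $\ker\eta=1$, then $\eta$ and $\id_{C}$ have the same kernel, and Proposition~\ref{prop-3factAss-exists-epi}(2) gives $(G,K,H,E)\cong\L(B)$; analogously, if $\ker\eta=\ker\lambda$, the same proposition yields $(G,K,H,E)\cong\S(B)$.

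For the necessity of~(1), the key observation is that
\[K\cap H=\delta(\ker\eta),\]
since every element of $H$ is of the form $\delta(c)\eta(c)$ for some $c\in C$ and lies in $K$ precisely when $\eta(c)=1$. Because $\delta$ is bijective, $K\cap H=1$ is equivalent to $\ker\eta=1$. The subgroup $K\cap H$ is preserved by any isomorphism of trifactorised groups, and by the very definition of $\L(B)$ one has $K_\L\cap H_\L=1$, so $(G,K,H,E)\cong\L(B)$ forces $\ker\eta=1$.

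For the necessity of~(2), I would use $\Cent_{E}(K)$ as the invariant. The induced action $\bar\lambda\colon E\longrightarrow\Aut(K)$ satisfies $\bar\lambda\circ\eta=\lambda$; since $\eta$ is surjective, this yields
\[\Cent_{E}(K)=\ker\bar\lambda=\eta(\ker\lambda).\]
An isomorphism of trifactorised groups restricts to isomorphisms on both $K$ and $E$, so it maps $\Cent_{E}(K)$ to $\Cent_{E'}(K')$; and the defining property of the small trifactorised group is $\Cent_{E_\S}(K_\S)=1$. Therefore $(G,K,H,E)\cong\S(B)$ forces $\eta(\ker\lambda)=1$, i.e.\ $\ker\lambda\leq\ker\eta$, which combined with the inclusion $\ker\eta\leq\ker\lambda$ built into the definition of an associated trifactorised group gives $\ker\eta=\ker\lambda$. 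The only real obstacle is spotting the two invariants $K\cap H$ and $\Cent_{E}(K)$; once they are identified, both directions reduce to routine checks and a single appeal to Proposition~\ref{prop-3factAss-exists-epi}(2).
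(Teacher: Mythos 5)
Your proof is correct, but your argument for the two ``only if'' directions takes a genuinely different route from the paper's. The paper deduces necessity from Proposition~\ref{prop-3factIso-BrAut}: an isomorphism between two associated trifactorised groups yields a brace automorphism $f$ with $f(\ker\eta_1)=\ker\eta_2$, and since both $1$ and $\ker\lambda$ are invariant under brace automorphisms, equality of kernels follows. You instead extract two concrete isomorphism invariants, $K\cap H=\delta(\ker\eta)$ and $\Cent_E(K)=\ker\bar\lambda=\eta(\ker\lambda)$, and compare them with their values in $\L(B)$ and $\S(B)$ (namely $1$ in both cases, the latter by the defining condition of $\mathbf{S3factGrp}$); both computations check out, and the fact that a trifactorised group isomorphism carries $K\cap H$ to $K'\cap H'$ and $\Cent_E(K)$ to $\Cent_{E'}(K')$ follows from Proposition~\ref{prop-3factMorphMonoEpiIso}(1). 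Your route is more elementary and self-contained: it avoids the forward reference to Proposition~\ref{prop-3factIso-BrAut} (which in the paper is stated \emph{after} this proposition and itself relies on Propositions~\ref{prop-3factMorphMonoEpiIso}, \ref{prop-BrMorphAss}, \ref{prop-Brmorph-3fact-equiv-cond} and~\ref{prop-Brmorph-to-3factmorph}), at the cost of handling the two cases by two different ad hoc invariants rather than one uniform mechanism; the paper's argument, by contrast, generalises immediately to any epimorphism whose kernel is characteristic in the brace. The sufficiency direction is identical in both proofs: Proposition~\ref{prop-3factAss-exists-epi}(2) applied to the kernels $1$ and $\ker\lambda$ of the epimorphisms defining $\L(B)$ and $\S(B)$.
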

\begin{proof}
If the kernels are equal, we have seen that the trifactorised groups are isomorphic, so we only need to prove the converse. Let us denote $\delta\colon C\longrightarrow K$ the identity map of $(B,+,\cdot)$.

If $(G,K,H,E)\cong \S(B,+,\cdot)$, by Proposition \ref{prop-3factIso-BrAut}, there exists a brace automorphism $f$ such that $f(\ker\lambda)=\ker\eta$. Since $\ker\lambda$ is invariant by brace automorphisms, it follows that $\ker\lambda=\ker\eta$. The argument is analogous for the large trifactorised group. 
\end{proof} 


\begin{prop}\label{prop-3factIso-BrAut}
Let $(B,+,\cdot)$ be a brace with additive group $K$, multiplicative group $C$ and identity map $\delta\colon C\longrightarrow K$. If $(G_i,K,H_i,E_i)$ is a trifactorised group associated with $(B,+,\cdot)$ with the corresponding epimorphism $\eta_i\colon C\longrightarrow E_i$ for $i=1$, $2$. Then $(G_1,K,H_1,E_1)\cong (G_2,K,H_2,E_2)$ if and only if there exists $f\in\Aut(B,+,\cdot)$ such that $f(\ker\eta_1)=\ker\eta_2$.
\end{prop}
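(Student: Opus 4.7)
The plan is to prove both directions by translating the condition $f(\ker\eta_1)=\ker\eta_2$ into a statement about the intersections $K\cap H_i$, and then invoking Propositions~\ref{prop-Brmorph-to-3factmorph}, \ref{prop-3factMorphMonoEpiIso}, and~\ref{prop-BrMorphAss}.

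First I would record a key identification: since $\delta\colon C\longrightarrow K$ is the identity map of the underlying set of $B$ and $H_i=\{\delta(c)\eta_i(c)\mid c\in C\}$, a product $\delta(c)\eta_i(c)$ lies in $K$ precisely when $\eta_i(c)=1$. Hence $K\cap H_i=\delta(\ker\eta_i)$, and so for any brace automorphism $f$ of $(B,+,\cdot)$ (which acts on the common underlying set $B$ and preserves both operations), we have $f(\ker\eta_1)=\ker\eta_2$ if and only if $f(K\cap H_1)=K\cap H_2$ as subgroups of $K$.

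For the ``if'' direction, given $f\in\Aut(B,+,\cdot)$ with $f(\ker\eta_1)=\ker\eta_2$, the inclusion $f(\ker\eta_1)\le\ker\eta_2$ lets me apply Proposition~\ref{prop-Brmorph-to-3factmorph} to produce a trifactorised group morphism $\bar f\colon (G_1,K,H_1,E_1)\longrightarrow(G_2,K,H_2,E_2)$. Since $f$ is a brace isomorphism and we have the full equality $f(\ker\eta_1)=\ker\eta_2$, part~(4) of the same proposition then gives that $\bar f$ is a trifactorised group isomorphism.

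For the ``only if'' direction, suppose $\varphi\colon (G_1,K,H_1,E_1)\longrightarrow(G_2,K,H_2,E_2)$ is a trifactorised group isomorphism. By Proposition~\ref{prop-BrMorphAss}, $\B(\varphi)=\varphi|_K$ is a brace morphism between $\B(G_1,K,H_1,E_1)$ and $\B(G_2,K,H_2,E_2)$, both of which equal $(B,+,\cdot)$ by Proposition~\ref{prop-Br3factEquiv}(2); Proposition~\ref{prop-3factMorphMonoEpiIso}(1) ensures $\varphi|_K$ is an isomorphism, so $f:=\varphi|_K\in\Aut(B,+,\cdot)$. Again by Proposition~\ref{prop-3factMorphMonoEpiIso}(1), $\varphi(H_1)=H_2$ and $\varphi(K)=K$, hence $\varphi(K\cap H_1)=K\cap H_2$, which by the initial identification translates into $f(\ker\eta_1)=\ker\eta_2$. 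There is no real obstacle here beyond this set-theoretic identification of $\ker\eta_i\le C$ with $K\cap H_i\le K$; the rest is a direct bookkeeping exercise using results already established in the paper.
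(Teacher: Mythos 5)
Your proof is correct and follows essentially the same route as the paper: the forward direction via the identification $K\cap H_i=\delta(\ker\eta_i)$ is exactly the mechanism behind Proposition~\ref{prop-Brmorph-3fact-equiv-cond} (which the paper applies to $g$ and $g^{-1}$), and the converse via Proposition~\ref{prop-Brmorph-to-3factmorph} matches the paper's argument. No gaps.
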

\begin{proof}
Let suppose that $(G_1,K,H_1,E_1)\cong (G_2,K,H_2,E_2)$ and let $g$ be a trifactorised group isomorphism. By Propositions \ref{prop-3factMorphMonoEpiIso} and \ref{prop-BrMorphAss}, $f=\B(g)\in\Aut(B,+,\cdot)$. By Proposition \ref{prop-Brmorph-3fact-equiv-cond} $f(\ker\eta_1)\leq\ker\eta_2$, and analogously with $g^{-1}$ we have that $\ker\eta_2\leq f({\ker\eta_1})$.

Suppose that there exists $f\in\Aut(B,+,\cdot)$ such that $f(\ker\eta_1)=\ker\eta_2$. Then $\ker\eta_1=\ker({\eta_2\circ f})$. By Proposition \ref{prop-Brmorph-to-3factmorph}, we can construct a trifactorised group isomorphism $\bar f\colon (G_1,K,H_1,E_1)\longrightarrow (G_2,K,H_2,E_2)$.\qedhere
\end{proof}

Let $(B,+,\cdot)$ be a brace with additive group $K$, multiplicative group $C$, and lambda map $\lambda$. Consider $f\in\Aut(B,+,\cdot)$ and $c\in\ker\lambda$, then $\lambda_{f(c)}(f(c'))=f({\lambda_c(c')})=f(c)$ for all $c\in B$.
Therefore, $f(c)\in\ker\lambda$ and $f({\ker\lambda})\leq\ker\lambda$. Arguing similarly with the inverse of $f$, the other inclusion holds. Hence $f_C(\ker\lambda)=\ker\lambda$.

Let $\Omega=\{N\unlhd C\mid N\leq\ker\lambda\}$. We have that $\Aut(B,+,\cdot)$ acts on $\Omega$ via $f\bullet N=f(N)$. The action is well defined because $f(N)\leq f(\ker\lambda)=\ker\lambda$.
\begin{teo}\label{teo-orb_3factAss}
Let $(B,+,\cdot)$ a brace with additive group $K$, multiplicative group $C$, and lambda map $\lambda\colon C\longrightarrow\Aut(K)$. Let $\Omega=\{N\unlhd C\mid N\leq\ker\lambda\}$. There is a bijection between the isomorphism classes of trifactorised groups associated with $(B,+,\cdot)$ and the orbits of the action of $\Aut(B,+,\cdot)$ on $\Omega$, which associates to the orbit of $N\in\Omega$ the trifactorised group associated with $(B,+,\cdot)$ corresponding to the natural epimorphism $\eta_N\colon C\longrightarrow C/N$.
\end{teo}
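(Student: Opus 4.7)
The plan is to show that the assignment $[N] \mapsto [(G_N, K, H_N, C/N)]$, where $(G_N, K, H_N, C/N)$ is the trifactorised group built from the natural epimorphism $\eta_N \colon C \to C/N$, descends to a well-defined bijection between orbits in $\Omega$ and isomorphism classes of trifactorised groups associated with $(B,+,\cdot)$. All three properties will be read off from Propositions~\ref{prop-3factAss-exists-epi} and~\ref{prop-3factIso-BrAut}.

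First I would verify well-definedness. Given $N \in \Omega$, the natural epimorphism $\eta_N$ satisfies $\ker \eta_N = N \leq \ker \lambda$, so the construction of Section~\ref{sec-3fact} produces an honest trifactorised group associated with $(B,+,\cdot)$. If $N_1, N_2$ lie in the same $\Aut(B,+,\cdot)$-orbit, say $f(N_1) = N_2$ with $f \in \Aut(B,+,\cdot)$, then Proposition~\ref{prop-3factIso-BrAut} (applied to the epimorphisms $\eta_{N_1}$ and $\eta_{N_2}$) immediately yields that the two associated trifactorised groups are isomorphic.

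Next I would show surjectivity onto isomorphism classes. Let $(G, K, H, E)$ be an arbitrary trifactorised group associated with $(B,+,\cdot)$, built from some epimorphism $\eta \colon C \to E$ with $\ker \eta \leq \ker \lambda$. Set $N := \ker \eta \in \Omega$. The epimorphisms $\eta$ and $\eta_N$ have the same kernel, so Proposition~\ref{prop-3factAss-exists-epi}(2) gives $(G, K, H, E) \cong (G_N, K, H_N, C/N)$. Thus $[(G,K,H,E)]$ is the image of the orbit of $N$.

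Finally I would prove injectivity. Suppose $N_1, N_2 \in \Omega$ give rise to isomorphic trifactorised groups $(G_{N_1}, K, H_{N_1}, C/N_1) \cong (G_{N_2}, K, H_{N_2}, C/N_2)$. The associated epimorphisms are precisely $\eta_{N_1}$ and $\eta_{N_2}$, whose kernels are $N_1$ and $N_2$ respectively. By the forward direction of Proposition~\ref{prop-3factIso-BrAut}, there exists $f \in \Aut(B,+,\cdot)$ with $f(N_1) = f(\ker \eta_{N_1}) = \ker \eta_{N_2} = N_2$, so $N_1$ and $N_2$ share an orbit. This completes the bijection. The argument is essentially bookkeeping: the only subtle point, namely that the action $f \bullet N = f(N)$ stabilises $\Omega$, has been checked in the paragraph preceding the statement, and there is no real obstacle beyond invoking the two propositions correctly.
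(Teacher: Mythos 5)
Your proposal is correct and follows essentially the same route as the paper's proof: well-definedness and injectivity both come from Proposition~\ref{prop-3factIso-BrAut}, and surjectivity from Proposition~\ref{prop-3factAss-exists-epi} applied to $\eta_{\ker\eta}$. Nothing is missing.
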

\begin{proof}
Let $N\in\Omega$. Since $\ker\eta_N=N\leq\ker\lambda$, $\eta_N$ defines a trifactorised group associated with $(B,+,\cdot)$. If we consider $N$, $M\in\Omega$ such that there exists $f\in\Aut(B,+,\cdot)$ which $f(N)=M$, by Proposition \ref{prop-3factIso-BrAut}, $\eta_N$ and $\eta_M$ define isomorphic trifactorised groups associated with $(B,+,\cdot)$. Hence this defines a map from the set of all orbits of the action of $\Aut(B,+,\cdot)$ on $\Omega$ to the set of all isomorphism classes of trifactorised groups associated with $(B,+,\cdot)$. This map is injective by Proposition \ref{prop-3factIso-BrAut}.

If $(G,K,H,E)$ is a trifactorised group associated with $(B,+,\cdot)$ constructed with the epimorphism $\eta\colon C\longrightarrow E$, by Proposition \ref{prop-3factAss-exists-epi}, we know that $\eta_{\ker\eta}$ gives us a trifactorised group isomorphic to $(G,K,H,E)$ associated with $(B,+,\cdot)$, therefore, our map is surjective too and  hence it is a bijection.\qedhere
\end{proof}

\section{Substructures of braces and generalised trifactorised groups}\label{sec-substruc}
In this section, we characterise substructures of braces in terms of generalised trifactorised groups.
We will identify 
each of these substructures with the corresponding subgroup $L$ of the additive group, represented by the subgroup $K$, and the corresponding subgroup $\sigma^{-1}(L)$ of the multiplicative group, represented by the subgroup $H$.

It is possible to compute the images of subsets of $H$ in $K$ or the preimages of subsets of $K$ in $H$ by the associated derivation $\sigma$ by means of the following result (cf.~\cite[Lemma~3.3]{BallesterEsteban22}).

\begin{prop}
  Consider the trifactorised group $(G, K, H, E)$ and consider $\sigma\colon H\longrightarrow K$ the corresponding bijective derivation. Then:
  \begin{enumerate}
  \item If $L\subseteq K$, then $\sigma^{-1}(L)=LE\cap H$.
  \item If $S\subseteq H$, then $\sigma(S)=SE\cap K$.
  \end{enumerate}
\end{prop}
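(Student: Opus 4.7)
The plan is to derive both statements directly from the unique factorisation $G = KE$ with $K \cap E = 1$, together with the explicit description of $\sigma$ given in Proposition~\ref{prop-BrAss3fact}: for $h \in H$, if we write $h = k_h e_h$ with $k_h \in K$ and $e_h \in E$, then $\sigma(h) = k_h$.

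For part~(1), I would start with the tautological rewriting $\sigma^{-1}(L) = \{h \in H : k_h \in L\}$. The inclusion $\sigma^{-1}(L) \subseteq LE \cap H$ is immediate: if $h \in H$ satisfies $k_h \in L$, then the decomposition $h = k_h e_h$ exhibits $h$ as an element of $LE$. For the reverse inclusion, take $h \in LE \cap H$ and write $h = \ell e$ with $\ell \in L \subseteq K$ and $e \in E$; applying the uniqueness of the $KE$-factorisation yields $k_h = \ell$ and $e_h = e$, whence $\sigma(h) = \ell \in L$.

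For part~(2), I would similarly use $\sigma(S) = \{k_h : h \in S\}$. If $k = k_h$ for some $h \in S$, then $k = h e_h^{-1} \in SE$, and of course $k \in K$, giving $\sigma(S) \subseteq SE \cap K$. Conversely, if $k \in SE \cap K$, write $k = s e$ with $s \in S \subseteq H$ and $e \in E$, and further $s = k_s e_s$ via unique factorisation. Then $k = k_s (e_s e)$ is another $KE$-decomposition of $k$; comparing with the trivial decomposition $k = k \cdot 1$ and invoking $K \cap E = 1$, we deduce $k = k_s = \sigma(s) \in \sigma(S)$.

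There is no real obstacle here: the whole argument is a bookkeeping exercise on the uniqueness of the factorisation $G = KE$ established in Proposition~\ref{prop-BrAss3fact}. The only point requiring mild care is remembering that in part~(2) an element of $SE$ need not lie in $K$ unless its $E$-component (after pushing the factorisation of $s \in H$ through) collapses, which is exactly what the hypothesis $k \in K$ forces.
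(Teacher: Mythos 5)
Your proposal is correct and follows essentially the same route as the paper's own proof: both directions of each equality are obtained by comparing the unique $KE$-factorisation of the relevant element with the given decomposition, using $K\cap E=1$. No issues.
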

\begin{proof}\begin{enumerate}
  \item 
    Let $h\in \sigma^{-1}(L)$. Then $h=k_he_h$ with $\sigma(h)=k_h\in L$ and $h=k_he_h\in LE\cap H$. Conversely, suppose that for some $l\in L$ and $e\in E$, we have that $h=le\in LE\cap H$. Since $l\in K$ and $e\in E$, $\sigma(h)=k_h=l$ and $e_h=e$. Therefore $\sigma^{-1}(L)=LE\cap H$.
      
  \item Suppose that $k\in \sigma(S)$. Then there exists $h\in S$ such that $\sigma(h)=k$ and $h=ke_h$. Hence $k=(ke_h)e_h^{-1}\in SE\cap K$. Conversely, suppose that $se\in K$ with $s\in S$, $e\in E$. Then $se=(k_se_s)e\in K$, which implies that $e_se=1$ and so $\sigma(s)=k_s=se$. Consequently, $\sigma(S)=SE\cap K$.\qedhere
\end{enumerate}
\end{proof}

\begin{prop}\label{prop-substr-3fact}
Let $(B,+,\cdot)$ be a brace with additive group $K$ and multiplicative group $C$. Let $(G,K,H,E)$ be an associated trifactorised group to $(B,+,\cdot)$. Consider $L$ a subset of $B$, then:
\begin{enumerate}
\item The following  statements are equivalent:\label{en-sb}
\begin{enumerate}
\item $(L,+,\cdot)$ is a subbrace of $(B,+,\cdot)$.\label{en-sb-1}
\item $L$ is a subgroup of $K$ and $LE\cap H$ is a subgroup of $H$.\label{en-sb-2}
\item $L$ is a subgroup of $K$ and $LE\cap H$ is a subgroup of $\Norm_G(L)$.\label{en-sb-3}
\item $LE\cap LH$ is a subgroup of $G$.\label{en-sb-4}
\end{enumerate}
\item The following statements are equivalent:\label{en-li}
\begin{enumerate}
\item $L$ is a left ideal of $(B,+,\cdot)$.\label{en-li-1}
\item $L$ is a subgroup of $K$ and $E$ is a subgroup of $\Norm_G(L)$.\label{en-li-2}
\item $LE$ is a subgroup of $G$.\label{en-li-3}
\end{enumerate}
\item The following statements are equivalent:\label{en-sli}
\begin{enumerate}
\item $L$ is a strong left ideal of $(B,+,\cdot)$.\label{en-sli-1}
\item $L$ is a normal subgroup of $G$.\label{en-sli-2}
\end{enumerate}
\item The following statements are equivalent:\label{en-id}
\begin{enumerate}
\item $L$ is an ideal of $(B,+,\cdot)$.\label{en-id-1}
\item $L$ is a normal subgroup of $G$ and $LE\cap H$ is a normal subgroup of~$H$.\label{en-id-2}
\item $LE\cap LH$ is a normal subgroup of $G$.\label{en-id-3}
\end{enumerate}
\end{enumerate}
\end{prop}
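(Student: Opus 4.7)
My plan is to translate each brace-theoretic condition on $L\subseteq B$ into a group-theoretic condition inside $(G,K,H,E)$ via two dictionary items. The previous proposition gives $\sigma^{-1}(L)=LE\cap H$, so ``$L$ is closed under $\boxdot$'' becomes ``$LE\cap H$ is a subgroup of $H$''; and the identity $\bar\lambda\circ\eta=\lambda$ tells us that conjugation of $K$ by $E$ realises the lambda action, so ``$\lambda_c(L)\subseteq L$ for all $c$'' becomes ``$E\leq\Norm_G(L)$''. As technical input I will establish at the outset two identities I reuse throughout: the conjugation formula
\[hlh^{-1}=k_h\,\lambda_{k_h}(l)\,k_h^{-1}=(k_h\boxdot l)-k_h\qquad(h=k_he_h\in H,\ l\in K),\]
and the set equality $LE\cap LH=L\cdot(LE\cap H)$, which drops out of the uniqueness of the $G=KE$ factorisation.

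For part~(\ref{en-sb}), (\ref{en-sb-1})\,$\Leftrightarrow$\,(\ref{en-sb-2}) is immediate from the dictionary. The nontrivial direction (\ref{en-sb-2})\,$\Rightarrow$\,(\ref{en-sb-3}) follows by plugging $h\in LE\cap H$ and $l\in L$ into the displayed formula: $k_h\boxdot l-k_h\in L$ because $L$ is closed under $\boxdot$ and under~$+$. The equivalence with~(\ref{en-sb-4}) is then read off the set equality, since under the previous items $L\cdot(LE\cap H)$ is a product of a subgroup by one that normalises it, hence a subgroup, while intersecting $LE\cap LH$ with $K$ and with $H$ recovers~$L$ and $LE\cap H$ respectively. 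Parts~(\ref{en-li}) and~(\ref{en-sli}) are routine applications of the dictionary: (\ref{en-li}) amounts to ``$LE$ is a subgroup iff $E$ normalises the subgroup $L$'', and (\ref{en-sli}) uses $G=KE$, so that $L\triangleleft G$ iff $K$ (additive normality) and $E$ (left ideal) both normalise $L$, i.e.\ iff $L$ is a strong left ideal.

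Part~(\ref{en-id}) is the substantial one. The equivalence (\ref{en-id-1})\,$\Leftrightarrow$\,(\ref{en-id-2}) combines part~(\ref{en-sli}) with the observation that the isomorphism $\sigma^{-1}\colon(K,\boxdot)\to H$ transports ``$L\triangleleft(K,\boxdot)$'' to ``$LE\cap H\triangleleft H$''. The direction (\ref{en-id-3})\,$\Rightarrow$\,(\ref{en-id-2}) is cheap: intersecting $LE\cap LH$ with $K$ produces $L\triangleleft G$ as an intersection of normal subgroups, and intersecting with $H$ produces $LE\cap H\triangleleft H$ since $H\leq\Norm_G(LE\cap LH)$.

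The main obstacle is (\ref{en-id-2})\,$\Rightarrow$\,(\ref{en-id-3}): I must show that $LE\cap LH=L\cdot\sigma^{-1}(L)$ is normal in~$G$. Writing $G=KH$ and using $L\triangleleft G$ and $\sigma^{-1}(L)\triangleleft H$, the problem reduces to $kh'k^{-1}\in L\sigma^{-1}(L)$ for $k\in K$ and $h'=k_{h'}e_{h'}\in\sigma^{-1}(L)$. Expanding via $\bar\lambda(e_{h'})=\lambda_{k_{h'}}$ gives $kh'k^{-1}=\bigl(k+k_{h'}\boxdot(-k)\bigr)\,e_{h'}$ with $e_{h'}=\eta(k_{h'})\in\eta(L)$, and since $L\sigma^{-1}(L)=L\cdot\eta(L)$ it suffices to place the $K$-coordinate in~$L$. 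The key input is a short preliminary lemma stating that in an ideal $l*b:=\lambda_l(b)-b\in L$ for every $l\in L$ and $b\in B$: I will prove this by writing $l\boxdot b=b\boxdot l'$ with $l'=b^{-1(\boxdot)}\boxdot l\boxdot b\in L$ (normality in $(K,\boxdot)$), applying the left-ideal property to get $\lambda_b(l')\in L$, and absorbing a $+$-conjugate via additive normality. Then $k+k_{h'}\boxdot(-k)=k+\bigl(k_{h'}+k_{h'}*(-k)\bigr)-k$ lies in~$L$ by additive normality once more. A more conceptual alternative is to apply Proposition~\ref{prop-Brmorph-to-3factmorph} to the brace projection $B\to B/L$: the quotient is a brace because $L$ is an ideal, the induced $\bar\eta$ still satisfies $\ker\bar\eta\leq\ker\bar\lambda$, and the resulting trifactorised group epimorphism $G\to\bar G$ has kernel $L\cdot\eta(L)=LE\cap LH$, which is therefore normal; either route ultimately rests on the same $l*b\in L$ ingredient.
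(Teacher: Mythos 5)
Your proposal is correct, and for parts~(\ref{en-sb})--(\ref{en-sli}) and the equivalences (\ref{en-id-1})$\Leftrightarrow$(\ref{en-id-2}) and (\ref{en-id-3})$\Rightarrow$(\ref{en-id-2}) it coincides with the paper's argument up to notation (your conjugation formula $hlh^{-1}=(k_h\boxdot l)-k_h$ is exactly the paper's computation $(le_l)l_1(le_l)^{-1}=l(e_ll_1e_l^{-1})l^{-1}$ rewritten in brace language). The genuine divergence is in the key direction (\ref{en-id-2})$\Rightarrow$(\ref{en-id-3}). You decompose $G=KH$, check normalisation of $L\sigma^{-1}(L)$ by $H$ for free, and then verify conjugation by $K$ element by element via $kh'k^{-1}=\bigl(k+k_{h'}\boxdot(-k)\bigr)e_{h'}$, which forces you to import the brace-theoretic lemma that $\lambda_l(b)-b\in L$ for an ideal $L$; your proof of that sub-lemma (rewrite $l\boxdot b$ as $b\boxdot l'$ using normality in $(K,\boxdot)$, then apply $\lambda$-invariance and additive normality) is correct. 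The paper instead decomposes $G=HE$ and handles the $E$-conjugation purely lattice-theoretically: it observes that $K(LE\cap H)$ is normal in $KH=G$, intersects with $E$ via a chain of Dedekind identities to conclude $E\cap LH\trianglelefteq E$, and hence that $L(E\cap LH)=LE\cap LH$ is $E$-invariant. What each buys: the paper's route never leaves the subgroup lattice of $G$ and needs no identity about ideals beyond the defining ones, whereas your route is more explicit about \emph{why} conjugation lands back in $L\sigma^{-1}(L)$ and isolates the standard ideal identity $\lambda_l(b)-b\in L$ as the real content of the statement; your alternative via the kernel of the induced morphism $G\to\bar G$ for $B\to B/L$ is also viable but, as you note, hides the same ingredient inside the well-definedness of the quotient brace. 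One presentational caveat: before speaking of normality of $L\sigma^{-1}(L)$ you should record explicitly that it is a subgroup (it equals $L(LE\cap H)$ with $L\trianglelefteq G$), and your identification $e_{h'}=\eta(k_{h'})$ silently uses the bijection $\delta$ between $K$ and $C$; neither is a gap, just bookkeeping worth making visible.
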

\begin{proof}\begin{enumerate}
\item 

\ref{en-sb-1} if and only if \ref{en-sb-2}. $L$ is a subbrace if and only if is a subgroup of $K$ and $LE\cap H=\sigma^{-1}(L)$ is a subgroup of $H$.

\ref{en-sb-2} implies \ref{en-sb-3}. Let $le_l\in LE\cap H$, where $l\in L$ and $e_l\in E$ and consider $l_1\in L$. Then $(le_l)l_1(le_l)^{-1}=l(e_ll_1e_l^{-1})l^{-1}$. Since $le_l$, $l_1e_{l_1}\in LE\cap H$ we have that $l({e_ll_1e_l^{-1}})e_le_{l_1}=(le_l)(l_1e_{l_1})\in LE\cap H$. Therefore, $l({e_ll_1e_l^{-1}})\in L$, which means that $(le_l)l_1(le_l)^{-1}\in L$ because $L$ is a subgroup of $K$. Therefore, $LE\cap H$ normalises $L$.

\ref{en-sb-3} implies \ref{en-sb-4}. If $LE\cap H$ is a subgroup of $\Norm_G(L)$, then $L(LE\cap H)$ is a subgroup of $G$ and, by Dedekind's identity, $LE\cap LH$ is a subgroup of $G$.

\ref{en-sb-4} implies \ref{en-sb-2}. Note that
\[LE\cap LH\cap K=L(E\cap K)\cap LH=L\cap LH=L.\]
Hence, $L$ is a subgroup of $K$. Furthermore, $LE\cap LH\cap H=LE\cap H$ is a subgroup of $H$.

\item 

  \ref{en-li-1} if and only if \ref{en-li-2}. We have that $L$ is a left ideal of $B$ if and only if it is a subgroup of $K$ and is $\lambda$-invariant (recall that $\lambda$ corresponds to the action of $E$ on $K$). Therefore, $L$ is a left ideal if and only if it is a subgroup of $K$ that is invariant by the conjugation by elements of $E$, equivalently, $E\leq \Norm_G(L)$.

\ref{en-li-2} implies \ref{en-li-3}. Since $E$ normalises $L$ it follows that $LE$ is a subgroup.

\ref{en-li-3} implies \ref{en-li-2} Since $LE$ is a subgroup it follows that $L=L(K\cap E)=K\cap LE$ is a normal subgroup of $LE$. Therefore, $E$ normalises $L$.

\item 

\ref{en-sli-1} implies \ref{en-sli-2}. A strong left ideal is simply a left ideal which is normal in the additive group, therefore, we have that $L$ is a normal subgroup of $K$ and $E$ normalises $L$. Hence, $G=KE\leq \Norm_G(L)$, equivalently, $L$ is a normal subgroup of $G$.

\ref{en-sli-2} implies \ref{en-sli-1}. Since $L$ is a normal subgroup of $G$ we have that $LE$ is a subgroup of $G$, hence, by \ref{en-li}, it follows that $L$ is a left ideal. Furthermore, $L$ is normal in $K$, so $L$ is a strong left ideal.

\item 

\ref{en-id-1} if and only if \ref{en-id-2}. An ideal is a strong left ideal which is normal in the multiplicative group, therefore, $L$ is an ideal if and only if $L$ is a strong left ideal of $B$ and $LE\cap H=\sigma^{-1}(L)$ is a normal subgroup of $H$. By \ref{en-sli}, this is equivalent to $L$ being a normal subgroup of $G$ and $LE\cap H$ being a normal subgroup of $H$.

\ref{en-id-2} implies \ref{en-id-3}. Since $L$ is a normal subgroup of $G$ we have that $LE\cap LH=L(LE\cap H)$ is a subgroup of $G$. Since $L$ is normal in $G$ and $LE\cap H$ is normal in $H$ it follows that $LE\cap LH=L(LE\cap H)$ is normalised by $H$. We want to prove that it is also normalised by $E$.

Let us prove that $E\cap LH$ is a normal subgroup of $E$. First note that $K(LE\cap H)$ is normal in $KH=G$. Therefore, $K(LE\cap H)\cap E$ is a normal subgroup of $E$. Using Dedekind's identity we obtain that
\begin{align*}
K(LE\cap H)\cap E&=KL(LE\cap H)\cap E=K(LE\cap LH)\cap E\\&=KL(E\cap LH)\cap E=K(E\cap LH)\cap E\\&=(E\cap LH)(K\cap E)=E\cap LH
\end{align*} 
In conclusion, $E\cap LH$ is a normal subgroup of $E$. Since $L$ is normal in $G$ and $E\cap LH$ is normal in $E$, we have that $LE\cap LH=L(E\cap LH)$ is normalised by $E$. 

Therefore, $LE\cap LH$ is normalised by $H$ and $E$, hence, by $HE=G$.

\ref{en-id-3} implies \ref{en-id-2}. Since $LE\cap LH$ is normal in $G$, $LE\cap H=LE\cap LH\cap H$ is a normal subgroup of $H$. Furthermore, $LE\cap LH\cap K$ is a normal subgroup of $G$. As we have seen in \ref{en-sb}, $LE\cap LH\cap K=L$. Hence, $L$ is a normal subgroup of $G$.\qedhere
\end{enumerate}
\end{proof}

Our next result shows a natural embedding of the trifactorised group associated with the subbrace of a brace $B$ in the trifactorised group associated with $B$.

\begin{lemma}\label{lemma-3fact-epi-K-E}
  Consider $(G,K,H,E)$ a trifactorised group. Let $\sigma\colon H\longrightarrow K$ be the corresponding bijective derivation and let $\pi_E\colon G\longrightarrow E$ be the natural projection. If $L\subseteq K$, then $\pi_E(\sigma^{-1}(L))=L^{-1}H\cap E$.
\end{lemma}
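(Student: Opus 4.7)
The plan is to prove the equality by double inclusion, exploiting the unique factorisation $g = k_g e_g$ of any element of $G$ with $k_g \in K$ and $e_g \in E$. The key observation, which comes straight from the definition of $\sigma$, is that for $h \in H$ we have $h = \sigma(h) \pi_E(h)$, so $\sigma(h) = h \pi_E(h)^{-1}$. This relation will be the engine of both inclusions.

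For the inclusion $\pi_E(\sigma^{-1}(L)) \subseteq L^{-1}H \cap E$, I would start with $e \in \pi_E(\sigma^{-1}(L))$, write $e = \pi_E(h) = e_h$ for some $h \in H$ with $\sigma(h) = k_h \in L$, and then rearrange $h = k_h e_h$ as $e_h = k_h^{-1} h$. Since $k_h \in L$ we get $k_h^{-1} \in L^{-1}$, and $e_h \in E$, so $e = e_h \in L^{-1}H \cap E$.

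For the converse $L^{-1}H \cap E \subseteq \pi_E(\sigma^{-1}(L))$, I would take $e \in L^{-1}H \cap E$ and write $e = l^{-1} h$ with $l \in L$, $h \in H$. Rearranging gives $h = l e$, which is the unique $K$-$E$ factorisation of $h$ (using $K \cap E = 1$), so $k_h = l \in L$ and $e_h = e$. Thus $h \in \sigma^{-1}(L)$ and $e = \pi_E(h) \in \pi_E(\sigma^{-1}(L))$.

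There is no real obstacle here: the whole proof is just a careful bookkeeping exercise on the factorisation $h = k_h e_h$. The only subtlety is making sure to invoke $K \cap E = 1$ in the second inclusion to guarantee that the factorisation $h = le$ identifies $l$ with $\sigma(h)$ and $e$ with $\pi_E(h)$; this is what forces $h$ to actually lie in $\sigma^{-1}(L)$ rather than merely in $LE \cap H$ (though of course the two coincide by the previous proposition).
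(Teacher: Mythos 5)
Your proof is correct and follows essentially the same route as the paper's: both inclusions are obtained by comparing the unique factorisation $h=k_he_h$ with the expression $e=l^{-1}h$ and invoking $K\cap E=1$. No further comment is needed.
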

\begin{proof}
  Let $h\in \sigma^{-1}(L)$, then $h=k_he_h$ with $k_h\in K$ and $e_h\in H$. Since $\sigma(h)=k_h$, we have that $k_h\in H$. Furthermore, $e_h=\pi_E(h)$. It follows that $e_h=k_h^{-1}h\in L^{-1}H\cap E$. We conclude that $\sigma^{-1}(L)\subseteq L^{-1}H\cap E$.

  Conversely, let $e=l^{-1}h\in L^{-1}H\cap E$. As before, let $h=k_he_h$ with $k_h\in K$ and $e_h\in E$. Then $e=l^{-1}k_he_h$ and so $l^{-1}k_h=1$, that is, $k_h=l$, and $e=e_h$. We conclude that $l=k_h=\sigma(h)$, in other words, $h\in \sigma^{-1}(L)$, and $\pi_E(h)=e$. Consequently, $e\in \pi_E(\sigma^{-1}(L))$. It follows that $L^{-1}H\cap E\subseteq \pi_E(\sigma^{-1}(L))$ and so we have the desired equality.
\end{proof}
\begin{prop}\label{prop-3factAss-SubBr}
Let $(B,+,\cdot)$  be a brace with additive group $K$ and multiplicative group $C$. Let $\sigma\colon H\longrightarrow K$ be the bijective derivation and consider $(G,K,H,E)$ a trifactorised group associated with $(B,+,\cdot)$ with the corresponding epimorphism $\eta\colon C\longrightarrow E$. If $L\subseteq K$ such that $(L,+,\cdot)$ is a subbrace of $(B,+,\cdot)$ and $T=L\sigma^{-1}(L)$, then
\[(T,L,T\cap H,T\cap E)=(LE\cap LH,L, LE\cap H,LH\cap E)\]
is a trifactorised group associated with $(L,+,\cdot)$. Furthermore, if $\delta\colon C\longrightarrow K$ is the identity map and $D=\delta^{-1}(L)$, the trifactorised group is constructed by means of the epimorphism $\eta|_{D}\colon D\longrightarrow LH\cap E$.
\end{prop}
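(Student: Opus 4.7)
The plan is to proceed in three stages: verify the set-theoretic identifications, check the trifactorisation axioms, and recognise both the associated brace and the constructing epimorphism.

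First I would rewrite $T$ in the stated two forms. Since $\sigma^{-1}(L)=LE\cap H$ by the preceding proposition, $T=L(LE\cap H)$, and Dedekind's identity applied with $L\le LH$ gives $T=LE\cap LH$. Intersecting successively with $K$, $H$, $E$, and using the equivalence between items~(\ref{en-sb-2}) and~(\ref{en-sb-4}) of Proposition~\ref{prop-substr-3fact}(\ref{en-sb}) for the first (which forces $LE\cap LH\cap K=L$), I obtain $T\cap K=L$, $T\cap H=LE\cap H=\sigma^{-1}(L)$, and $T\cap E=LH\cap E$ (using $E\le LE$).

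Second I would verify the axioms in~\eqref{eq-3fact} for the tuple $(T,L,T\cap H,T\cap E)$. That $T$ is a subgroup of $G$ is item~(\ref{en-sb-4}) of Proposition~\ref{prop-substr-3fact}. Normality of $L$ in $T$ follows because $T=L(T\cap H)$ and both $L$ (trivially) and $T\cap H=LE\cap H$ (by item~(\ref{en-sb-3})) normalise $L$. The factorisations $T=L(T\cap H)$ and $T=L(T\cap E)$ are immediate from Dedekind applied to $T=LE\cap LH$. For $T=(T\cap H)(T\cap E)$, given $t\in T$ I would write $t=he$ uniquely with $h\in H$ and $e\in E$ using $G=HE$ and $H\cap E=1$; since $t\in LE$ and the $KE$-decomposition of $t$ reads $\sigma(h)\cdot(\pi_E(h)e)$, uniqueness forces $\sigma(h)\in L$, hence $h\in\sigma^{-1}(L)=T\cap H$, and therefore $e=h^{-1}t\in T\cap E$. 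The remaining intersections $L\cap(T\cap E)\le K\cap E=1$ and $(T\cap H)\cap(T\cap E)\le H\cap E=1$ are inherited from $G$.

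Third I would compute the associated brace via Proposition~\ref{prop-BrAss3fact}. The conjugation action of $T\cap E$ on $L$ inside $T$ is the restriction of the action of $E$ on $K$ inside $G$, and the bijective derivation of the new tuple is $\sigma|_{T\cap H}$. Hence the brace multiplication on $L$ produced by Proposition~\ref{prop-BrAss3fact} coincides with the restriction of the original multiplication of $B$ to $L$, and so $\mathsf{B}(T,L,T\cap H,T\cap E)=(L,+,\cdot)$; by Proposition~\ref{prop-Br3factEquiv}(1) this already yields the first claim. For the final clause, $D=\delta^{-1}(L)$ is the multiplicative subgroup of $B$ associated with the subbrace $L$; the restriction $\eta|_D$ is a group homomorphism, and Lemma~\ref{lemma-3fact-epi-K-E} gives $\eta(D)=\pi_E(\sigma^{-1}(L))=LH\cap E$ (using $L^{-1}=L$ inside $K$). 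Its kernel $D\cap\ker\eta$ lies in $D\cap\ker\lambda=\ker(\lambda|_D)$, so $\eta|_D$ legitimately defines a trifactorised group associated with $(L,+,\cdot)$; running the construction of Section~\ref{sec-3fact} on this data reconstructs $L\cdot(LH\cap E)=T$ whose $H$-part is $\{l\cdot\eta(l)\mid l\in L\}=\sigma^{-1}(L)=T\cap H$, giving back $(T,L,T\cap H,T\cap E)$.

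The only step that requires real care is the identity $T=(T\cap H)(T\cap E)$: Dedekind's identity does not cover it directly, and one has to exploit the uniqueness of the $HE$-factorisation in $G$ together with the $KE$-factorisation of elements of $H$ to locate the $H$-part of a generic element of $T$ inside $\sigma^{-1}(L)$.
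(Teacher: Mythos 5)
Your proof is correct. The skeleton is the same as the paper's (Dedekind's identity to pin down $T$, $T\cap H$, $T\cap E$; Proposition~\ref{prop-substr-3fact} for the subgroup and normality facts; Lemma~\ref{lemma-3fact-epi-K-E} to identify $\eta(D)=LH\cap E$), but you diverge at two points. First, for the factorisation $T=(T\cap H)(T\cap E)$ you invoke uniqueness of the $KE$-decomposition of an element $t=he\in T\subseteq LE$ to force $\sigma(h)\in L$; the paper instead gets this in one line of Dedekind, $(LH\cap E)(LE\cap H)=LE\cap (LH\cap E)H=LE\cap LH\cap EH=LE\cap LH$. The paper's computation is shorter, while yours makes visible \emph{why} the $H$-part of such an element must lie in $\sigma^{-1}(L)$, which you rightly flag as the only delicate step. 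Second, you establish that the tuple is associated with $(L,+,\cdot)$ by computing $\B(T,L,T\cap H,T\cap E)$ directly and appealing to Proposition~\ref{prop-Br3factEquiv}(1), and only then identify the constructing epimorphism as $\eta|_D$; the paper folds both claims into one step by running the Section~\ref{sec-3fact} construction on $\eta|_D\colon D\longrightarrow\eta(D)$ (checking $\ker(\eta|_D)=D\cap\ker\eta\le\ker\lambda$, as you also do) and matching the resulting tuple $(\tilde G,L,\tilde H,\eta(D))$ with $(T,L,T\cap H,T\cap E)$ via $\tilde G=L\eta(D)=L(LH\cap E)=T$ and $\tilde H=H\cap\tilde H\tilde E=H\cap T$. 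Your route is mildly redundant — once the epimorphism is identified, the association with $(L,+,\cdot)$ follows anyway — but it is self-contained and every step checks out.
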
 
\begin{proof}
By Dedekind's identity, $T=L\sigma^{-1}(L)=L(LE\cap H)=LE\cap LH$. Then $T\cap H=LE\cap H$ and $T\cap E=LH\cap E$.

By Proposition \ref{prop-substr-3fact}, $LE\cap LH$ is a group and $LE\cap H$ is a subgroup of $G$ that normalises $L$. Therefore $LE\cap LH=L(LE\cap H)\leq \Norm_G(L)$, thus $L$ is a normal subgroup of $LE\cap LH$. Using Dedekind's identity we have that
\begin{align*}
(LH\cap E)(LE\cap H)&=LE\cap (LH\cap E)H=LE\cap LH\cap EH=LE\cap LH.
\end{align*}
The other properties for $(T,L,T\cap H,T\cap E)$ to be a trifactorised group follow from an application of Dedekind's identity.

We have that $\eta|_{D}\colon D\longrightarrow\eta(D)$ is an epimorphism that defines a trifactorised group $(\tilde G,L,\tilde H,\eta(D))$ associated with $(L,+,\cdot)$. Consider the map $\beta\colon C\longrightarrow H$ given by $\beta(c)=\delta(c)\eta(c)$ for $c\in C$. Then $\delta=\sigma\circ\beta$ and $\eta={\pi_E|}_H\circ \beta$. Consequently, $\eta(D)=\eta(\delta^{-1}(L))=\pi_E(\beta(\beta^{-1}(\sigma^{-1}(L))))=\pi_E(\sigma^{-1}(L))=L^{-1}H\cap E=LH\cap E$ by Lemma~\ref{lemma-3fact-epi-K-E} and the fact that $L\le K$. 
Hence, $\tilde G=L\eta(D)=L(LH\cap E)=T$, and $\tilde H=\tilde H(H\cap \tilde E)=H\cap\tilde H\tilde E=H\cap T$.
\end{proof}

\subsection{Generalised trifactorised subgroups}\label{subsec-sub3fact}

\begin{defi}
Given a trifactorised group $(G,K,H,E)$ we define a \emph{trifactorised subgroup} of $(G,K,H,E)$ as a trifactorised group $(\tilde G,\tilde K,\tilde H,\tilde E)$ such that $\tilde G\leq G$, $\tilde K\leq K$, $\tilde H\leq H$ and $\tilde E\leq E$. We write $(\tilde G,\tilde K,\tilde H,\tilde E)\leq (G,K,H,E)$.
\end{defi}

\begin{prop}\label{prop-sub3fact-intersec}
Let $(G,K,H,E)$ be a trifactorised group with bijective derivation $\sigma\colon H\longrightarrow K$ and let $(\tilde G,\tilde K,\tilde H, \tilde E)$ be a trifactorised subgroup of $(G,K,H,E)$ with bijective derivation $\tilde\sigma\colon \tilde H\longrightarrow\tilde K$. Then:
\begin{enumerate}
\item $\tilde K=\tilde G\cap K$, $\tilde H=\tilde G\cap H$ and $\tilde E=\tilde G\cap E$.
\item $\tilde\sigma=\sigma|_{\tilde H}$.
\end{enumerate}
\end{prop}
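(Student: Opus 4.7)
The plan is to exploit the uniqueness of the decompositions $g=ke$ ($k\in K$, $e\in E$) in $G$, together with the trivial intersections $K\cap E=H\cap E=1$ that hold in both the ambient trifactorised group and (by definition) in the subobject. The three equalities in~(1) should follow from three parallel arguments, each choosing the factorisation $\tilde G=\tilde K\tilde E=\tilde K\tilde H=\tilde H\tilde E$ best suited to the factor under consideration; then~(2) will be an immediate corollary of the uniqueness of the $KE$-decomposition.

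For the first equality $\tilde K=\tilde G\cap K$ in~(1), the inclusion $\subseteq$ is immediate. For $\supseteq$, I take $g\in\tilde G\cap K$ and use the factorisation $\tilde G=\tilde K\tilde E$ to write $g=\tilde k\tilde e$. Then $\tilde e=\tilde k^{-1}g\in K$ since both $\tilde k$ and $g$ lie in $K$, while $\tilde e\in\tilde E\le E$; so $\tilde e\in K\cap E=1$, and $g=\tilde k\in\tilde K$. For the equality $\tilde H=\tilde G\cap H$, I use instead the factorisation $\tilde G=\tilde H\tilde E$: if $g\in\tilde G\cap H$ and $g=\tilde h\tilde e$, then $\tilde e=\tilde h^{-1}g\in H\cap E=1$, whence $g=\tilde h\in\tilde H$. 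For $\tilde E=\tilde G\cap E$, I go back to $\tilde G=\tilde K\tilde E$: if $g\in\tilde G\cap E$ and $g=\tilde k\tilde e$, then $\tilde k=g\tilde e^{-1}\in K\cap E=1$, so $g=\tilde e\in\tilde E$.

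For~(2), let $\tilde h\in\tilde H$ and write $\tilde h=\tilde k_{\tilde h}\tilde e_{\tilde h}$ with $\tilde k_{\tilde h}\in\tilde K$, $\tilde e_{\tilde h}\in\tilde E$, so that $\tilde\sigma(\tilde h)=\tilde k_{\tilde h}$ by definition. Since $\tilde K\le K$ and $\tilde E\le E$, this is also a $KE$-decomposition of $\tilde h$ viewed as an element of $G$; by the uniqueness established in Proposition~\ref{prop-BrAss3fact}, it coincides with the decomposition $\tilde h=k_{\tilde h}e_{\tilde h}$ used to define $\sigma$. Hence $\tilde\sigma(\tilde h)=\tilde k_{\tilde h}=k_{\tilde h}=\sigma(\tilde h)$, proving $\tilde\sigma=\sigma|_{\tilde H}$.

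There is no real obstacle here: the entire content of the proposition is that the triple decomposition is rigid, in the sense that a subtrifactorisation is forced to intersect each of the three given subgroups in its own three factors. The only point requiring a little care is to select, for each of the three intersections in~(1), the factorisation of $\tilde G$ that produces the desired trivial intersection from $K\cap E=1$ or $H\cap E=1$.
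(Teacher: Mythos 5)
Your proof is correct and is essentially the paper's argument: the paper computes each intersection in one line via Dedekind's modular law (e.g.\ $\tilde G\cap K=\tilde K\tilde E\cap K=\tilde K(\tilde E\cap K)=\tilde K$), which is exactly your elementwise computation packaged as an identity, and part~(2) is proved there by the same uniqueness-of-factorisation observation you use.
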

\begin{proof}
The proofs of all three equalities are analogous, so let us prove it for~$\tilde K$.
\[\tilde G\cap K=\tilde K\tilde E\cap K=\tilde K(\tilde E\cap K)=\tilde K.\]
Given $h\in \tilde H$, it admits a factorisation $h=\tilde{k}_h\tilde{e}_h$ with $\tilde{k}_h\in\tilde K\leq K$ and $\tilde{e}_h\in \tilde E\leq E$. Since this  factorisation is unique in $G$, we have that $\sigma(h)=\tilde{k}_h=\tilde{\sigma}(h)$.
\end{proof}
\begin{remark}
Proposition \ref{prop-sub3fact-intersec} shows that all trifactorised subgroups are of the form $(T,T\cap K,T\cap H,T\cap E)$ where $T$ is a subgroup of $G$, but not every subgroup of $G$ works, for example if $T=K$, we have $(K,K,K\cap H,1)$ which in general is not a trifactorised group because $\tilde H\tilde E=K\cap H$.
\end{remark}
\begin{defi}
Let $(G,K,H,E)$ be a trifactorised group. A subgroup $T$ of $G$ is a \emph{trifactorised subgroup} if $(T,T\cap K,T\cap H,T\cap E)$ is a trifactorised subgroup of $(G,K,H,E)$.
\end{defi}
Our next result characterises the trifactorised subgroups of $G$.
\begin{prop}\label{prop-sub3fact-equiv}
Let $(G,K,H,E)$ be a trifactorised group and let $T$ be a subgroup of $G$. The following statements are equivalent:
\begin{enumerate}
\item $T$ is a trifactorised subgroup of $G$. \label{sub3fact-equiv-1}
\item $T=(T\cap K)E\cap (T\cap K)H$.\label{sub3fact-equiv-2}
\end{enumerate}
\end{prop}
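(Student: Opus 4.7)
The plan is to prove the two implications separately, using in both directions the uniqueness of the $KE$--decomposition of elements of $G$ (since $G=KE$ and $K\cap E=1$) together with the identity $\sigma^{-1}(L)=LE\cap H$ established earlier for subsets $L\subseteq K$.

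For $(\ref{sub3fact-equiv-1})\Rightarrow(\ref{sub3fact-equiv-2})$, I would observe first that, since $T$ is a trifactorised subgroup, $T=(T\cap K)(T\cap E)\subseteq (T\cap K)E$ and $T=(T\cap K)(T\cap H)\subseteq(T\cap K)H$, so one inclusion is immediate. For the reverse, I take $x\in (T\cap K)E\cap(T\cap K)H$ and write $x=ke=k'h$ with $k,k'\in T\cap K$, $e\in E$, $h\in H$. From $k'h=ke$ I read off that $h=(k')^{-1}k\cdot e$, and the uniqueness of the $KE$--decomposition of $h$ (together with $\sigma(h)$ being its $K$--component) yields $\sigma(h)=(k')^{-1}k\in T\cap K$. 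Invoking Proposition~\ref{prop-sub3fact-intersec}(2), which says $\sigma|_{T\cap H}$ is the bijective derivation of the trifactorised subgroup, gives $\sigma^{-1}(T\cap K)=T\cap H$, so $h\in T\cap H$ and therefore $x=k'h\in T$.

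For $(\ref{sub3fact-equiv-2})\Rightarrow(\ref{sub3fact-equiv-1})$, set $\tilde K=T\cap K$, $\tilde H=T\cap H$, $\tilde E=T\cap E$. Normality of $\tilde K$ in $T$ is automatic from $K\trianglelefteq G$, and the three pairwise trivial intersections are inherited from those in $(G,K,H,E)$. To produce the factorisations $T=\tilde K\tilde E=\tilde K\tilde H$, I use the two inclusions implicit in the hypothesis: every $x\in T$ can be written as $ke$ with $k\in\tilde K$, $e\in E$, and also as $k'h$ with $k'\in\tilde K$, $h\in H$; then $e=k^{-1}x\in T$ forces $e\in\tilde E$, and similarly $h\in\tilde H$, giving the two required factorisations.

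The delicate step, which I expect to be the main obstacle, is showing $T=\tilde H\tilde E$. For $x\in T$ I use the unique decomposition $x=he$ provided by $G=HE$ and $H\cap E=1$, and must show $h,e\in T$. To get $h\in T$, I combine the already established $x\in\tilde K E$ with $h\in H$: writing $x=k_1e_1$ with $k_1\in\tilde K$, $e_1\in E$, the equality $h=k_1e_1e^{-1}$ is the $KE$--decomposition of $h$, so $\sigma(h)=k_1\in\tilde K$; applying the identity $\sigma^{-1}(\tilde K)=\tilde K E\cap H$ gives $h\in T\cap H$, and then $e=h^{-1}x\in T\cap E$. This completes the proof that $(T,\tilde K,\tilde H,\tilde E)$ satisfies \eqref{eq-3fact} and is therefore a trifactorised subgroup.
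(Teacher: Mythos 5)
Your proof is correct and follows essentially the same route as the paper's: both directions turn on the identity $(T\cap K)E\cap H=\sigma^{-1}(T\cap K)=T\cap H$ supplied by Proposition~\ref{prop-sub3fact-intersec} together with the uniqueness of the $KE$- and $HE$-factorisations. The only difference is presentational: you chase elements through the unique decompositions where the paper compresses the same steps into applications of Dedekind's identity.
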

\begin{proof}
\ref{sub3fact-equiv-1} implies \ref{sub3fact-equiv-2}. Let $\sigma\colon H\longrightarrow K$ the bijective derivation of $(G,K,H,E)$ and $\tilde\sigma\colon T\cap H\longrightarrow T\cap K$ be the bijective derivation of the trifactorised subgroup $T$. By Proposition \ref{prop-sub3fact-intersec} \[(T\cap K)E\cap H=\sigma^{-1}(T\cap K)=\tilde{\sigma}^{-1}({T\cap K})=T\cap H.\] Then, by Dedekind's identity,
\begin{align*}
(T\cap K)E\cap (T\cap K)H=(T\cap K)({(T\cap K)E\cap H})=(T\cap K)(T\cap H)=T.
\end{align*}
\ref{sub3fact-equiv-2} implies \ref{sub3fact-equiv-1}. The equalities $(T\cap K)(T\cap H)=T$ and $(T\cap K)(T\cap E)=T$ follow directly from Dedekind's identity. Furthermore,
\begin{align*}
(T\cap H)(T\cap E)&=T\cap (T\cap H)E\\&=T\cap ((T\cap K)E\cap (T\cap K)H\cap H)E\\&=T\cap ((T\cap K)E\cap H)E\\&=T\cap (T\cap K)E\cap HE\\&=T\cap (T\cap K)E=T.
\end{align*}
Therefore $T$ is a trifactorised subgroup of $G$.\qedhere
\end{proof}

\begin{prop}
  Let $(B,+,\cdot)$ be a brace with additive group $K$ and multiplicative group $C$. Consider $(G,K,H,E)$ a trifactorised group associated with $(B,+,\cdot)$. Then the map $L\mapsto LH\cap LE$ is a bijection between subbraces of $(B,+,\cdot)$ and trifactorised subgroups of $(G,K,H,E)$. Furthermore, the restriction of this map to the set all ideals of $B$ defines a bijection between the set of all the ideals of $B$ and the set of all normal trifactorised subgroups of~$G$.
\end{prop}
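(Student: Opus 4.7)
The plan is to derive this bijection directly from three previously established results: Proposition~\ref{prop-substr-3fact} (characterising subbraces and ideals by subgroup conditions on $G$), Proposition~\ref{prop-3factAss-SubBr} (which equips the image with its trifactorised structure), and Proposition~\ref{prop-sub3fact-equiv} (which recognises subgroups of $G$ of the form $(T\cap K)H\cap(T\cap K)E$ as exactly the trifactorised subgroups). The whole statement is then essentially a bookkeeping exercise; the work was all done earlier.

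First I would verify that the map is well defined. Given a subbrace $L$ of $B$, the implication \ref{en-sb-1}$\Rightarrow$\ref{en-sb-4} in Proposition~\ref{prop-substr-3fact} tells us that $LH\cap LE$ is a subgroup of $G$, and Proposition~\ref{prop-3factAss-SubBr} identifies the tuple $(LH\cap LE,\,L,\,LE\cap H,\,LH\cap E)$ as a trifactorised group associated with $L$; this is in particular a trifactorised subgroup of $(G,K,H,E)$.

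Next I would prove injectivity and surjectivity. For injectivity, I would note that one recovers $L$ from its image via $(LH\cap LE)\cap K=L$, a one-line application of Dedekind's identity already carried out inside the proof of Proposition~\ref{prop-sub3fact-equiv}, so two distinct subbraces cannot share an image. For surjectivity, let $T$ be a trifactorised subgroup of $G$ and set $L=T\cap K$. By Proposition~\ref{prop-sub3fact-equiv} we have $T=(T\cap K)H\cap(T\cap K)E=LH\cap LE$; in particular $LH\cap LE$ is a subgroup of $G$, and hence by \ref{en-sb-4}$\Rightarrow$\ref{en-sb-1} of Proposition~\ref{prop-substr-3fact} the set $L$ is a subbrace of $B$, with image $T$ under our map.

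Finally, for the restriction to ideals I would appeal to the equivalence \ref{en-id-1}$\Leftrightarrow$\ref{en-id-3} of Proposition~\ref{prop-substr-3fact}: a subbrace $L$ is an ideal of $B$ if and only if $LH\cap LE$ is a normal subgroup of $G$, i.e.\ a normal trifactorised subgroup. Combined with the bijection already established, this gives exactly the claimed restricted bijection. The only step doing any nontrivial work is the use of Proposition~\ref{prop-sub3fact-equiv} in the surjectivity argument, since without the identity $T=(T\cap K)H\cap(T\cap K)E$ it would not be clear that every trifactorised subgroup arises in this way; everything else is immediate from the previously catalogued equivalences.
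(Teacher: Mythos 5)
Your proposal is correct and follows essentially the same route as the paper: Proposition~\ref{prop-3factAss-SubBr} for well-definedness, Proposition~\ref{prop-sub3fact-equiv} together with the equivalence \ref{en-sb-4}$\Leftrightarrow$\ref{en-sb-1} for the inverse map, and Proposition~\ref{prop-substr-3fact}~(\ref{en-id}) for the ideal case; you merely spell out the mutual-inverse check more explicitly than the paper does. (Minor note: the identity $LH\cap LE\cap K=L$ is computed in the proof of Proposition~\ref{prop-substr-3fact}, not of Proposition~\ref{prop-sub3fact-equiv}, but the computation is valid either way.)
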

\begin{proof}
If $L$ is a subbrace of $(B,+,\cdot)$, Proposition \ref{prop-3factAss-SubBr} shows that $LE\cap LH$ is a trifactorised subgroup of $(G,K,H,E)$. If $T$ is a trifactorised subgroup of $G$, we have that $T\cap K$ corresponds to a subbrace of $(B,+,\cdot)$ because $(T\cap K)H\cap (T\cap K)E=T\leq G$. Hence, $T\mapsto T\cap K$ is a map from the the set of all trifactorised subgroups of $(G,K,H,E)$ to the set of all subbraces of $(B,+,\cdot)$. It follows that these two maps are mutually inverses and so they are bijective. The last claim follows from Proposition~\ref{prop-substr-3fact}~(\ref{en-id}).
\end{proof}

\section{Quotients of generalised trifactorised groups}\label{sec-quo}
Our main goal in this section is to show how we can make quotients with the substructure introduced in Section \ref{subsec-sub3fact}, and how these quotients are related to brace quotients.

Note that not every quotient of a trifactorised group is a trifactorised group.
\begin{ex}
    If $(G,K,H,E)$ is a trifactorised group with $G\neq K$, then $(G/K,1,G/K,G/K)$ is not a trifactorised group because $G/K\cap G/K\neq 1$.
\end{ex}

Not all the quotients of a trifactorised group are quotients by a normal trifactorised subgroup.
\begin{ex}
Let $(G,K,H,E)$ be a trifactorised group with $E$ nontrivial and normal in $G$. Consider $T=E$. Then
\begin{align*}
KT\cap ET&=KE\cap EE=G\cap E=E,\\
HT\cap ET&=HE\cap EE=G\cap E=E,\\
(T\cap K)H\cap (T\cap K)E&=(E\cap K)H\cap(E\cap K)E=H\cap E=1
\end{align*}
The first two equations prove that $(G/T,KT/T,HT/T,ET/T)$ is a trifactorised group, but $T$ is not a trifactorised subgroup.
\end{ex}

The following theorem characterises the normal subgroups of a trifactorised group whose corresponding quotient is again trifactorised. 
\begin{teo}\label{teo-3fact-quo-equi}
Let $(G,K,H,E)$ be a trifactorised group and $T$ a normal subgroup of $G$, then the following statements are equivalent:
\begin{enumerate}
\item $(G/T,KT/T,HT/T,ET/T)$ is a trifactorised group. \label{quo-equi-1}
\item $T=(T\cap K)(T\cap E)=(T\cap H)(T\cap E)$.\label{quo-equi-2}
\item \label{quo-equi-3} $T=({(T\cap K)H\cap (T\cap K)E})(T\cap E).$
      \hfill 
      \puteqnum\label{eq-3fact-quo}
\end{enumerate}
\end{teo}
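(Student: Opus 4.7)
The plan is to prove the equivalences $(1)\Leftrightarrow(2)$ and $(2)\Leftrightarrow(3)$. The starting observation is that, since $G=KH=KE=HE$, the three product conditions for $(G/T,KT/T,HT/T,ET/T)$ to be a generalised trifactorised group are automatic; and since the definition of a generalised trifactorised group requires only $K\cap E=H\cap E=1$, condition $(1)$ is equivalent to the pair of intersection equalities $KT\cap ET=T$ and $HT\cap ET=T$ (no condition on $KT\cap HT$).

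For $(1)\Rightarrow(2)$, given $t\in T$ I take the unique $KE$- and $HE$-decompositions $t=ke=he'$. Since $T\trianglelefteq G$, $k=te^{-1}\in TE=ET$, and also $k\in K\subseteq KT$; thus $(1)$ forces $k\in T$, whence $k\in T\cap K$ and $e\in T\cap E$. The argument for $h$ and $e'$ is symmetric. Conversely, under $(2)$ the normality of $T$ gives $ET=(T\cap K)E$, and $(2)$ itself gives $KT=K(T\cap E)$ and $HT=H(T\cap E)$; the equality $KT\cap ET=T$ then follows by comparing unique $KE$-decompositions of a generic element, while $HT\cap ET=T$ uses in addition the structural observation described in the last paragraph.

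For $(2)\Rightarrow(3)$ the task reduces to the inclusion $(T\cap K)H\cap(T\cap K)E\subseteq T$ (the reverse inclusion comes from the trivial facts $T\cap K,\,T\cap H\subseteq(T\cap K)H\cap(T\cap K)E$ together with $(2)$): any element $t_Kh=t'_Ke$ of this intersection satisfies $h=t_K^{-1}t'_Ke\in(T\cap K)E\cap H=\sigma^{-1}(T\cap K)$, and this last set is contained in $T\cap H$ by the structural observation, so the element lies in $(T\cap K)(T\cap H)\subseteq T$. Conversely, $(3)$ immediately gives $T\subseteq(T\cap K)E$, and Dedekind's identity (using $T\cap K\leq T$) yields $T=(T\cap K)(T\cap E)$; the extra inclusion $(T\cap K)E\cap H\subseteq(T\cap K)H\cap(T\cap K)E\subseteq T$ coming from $(3)$ shows $\sigma^{-1}(T\cap K)\subseteq T\cap H$, so every $t_K\in T\cap K$ equals $\sigma(t_H)$ for some $t_H\in T\cap H$, and then each $t=t_Kt_E\in T$ can be rewritten as $t_H(e_{t_H}^{-1}t_E)\in(T\cap H)(T\cap E)$.

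The main obstacle is the single structural observation used in both $(2)\Rightarrow(1)$ and $(2)\Rightarrow(3)$: under $(2)$, the bijective derivation $\sigma\colon H\to K$ restricts to a bijection between $T\cap H$ and $T\cap K$. The unique $KE$-decomposition of any $h\in T\cap H\subseteq T=(T\cap K)(T\cap E)$ forces $\sigma(h)\in T\cap K$, and the cardinality equality $|T\cap H|=|T|/|T\cap E|=|T\cap K|$ (which also follows from $(2)$) forces surjectivity. Once this is in hand, any $h\in H$ with $\sigma(h)\in T\cap K$ automatically lies in $T\cap H$, which is exactly what is needed to close both of the harder directions.
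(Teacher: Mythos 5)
Your overall architecture is sound and genuinely different in flavour from the paper's: where the paper runs the cycle $(1)\Rightarrow(2)\Rightarrow(3)\Rightarrow(1)$ entirely by Dedekind-identity manipulations of subgroups, you prove $(1)\Leftrightarrow(2)$ and $(2)\Leftrightarrow(3)$ element-wise through the uniqueness of the $KE$- and $HE$-decompositions, isolating the role of the derivation in a single ``structural observation'': under $(2)$ one has $\sigma^{-1}(T\cap K)=T\cap H$. Your reduction of $(1)$ to the two conditions $KT\cap ET=T$ and $HT\cap ET=T$ is correct (the definition of the category indeed imposes nothing on $K\cap H$, and the three product conditions pass to the quotient automatically), and the implications $(1)\Rightarrow(2)$, $(2)\Rightarrow(3)$, $(3)\Rightarrow(2)$ and the two uses of the observation inside $(2)\Rightarrow(1)$ and $(2)\Rightarrow(3)$ all check out, granted the observation itself.

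The one step that does not survive scrutiny is your justification of the surjectivity of $\sigma|_{T\cap H}\colon T\cap H\longrightarrow T\cap K$ by the count $\card{T\cap H}=\card{T}/\card{T\cap E}=\card{T\cap K}$. Nothing in the statement, nor anywhere in the paper's setup, restricts to finite groups, and for infinite $T$ the equalities $\card{T}=\card{T\cap H}\cdot\card{T\cap E}=\card{T\cap K}\cdot\card{T\cap E}$ admit no cancellation, so ``injective between sets of equal cardinality'' is not available. The repair is immediate and needs no counting: given $k\in T\cap K\subseteq T=(T\cap H)(T\cap E)$, write $k=h'e'$ with $h'\in T\cap H$ and $e'\in T\cap E$; then $h'=k(e')^{-1}$ with $k\in K$ and $(e')^{-1}\in E$ is the unique $KE$-decomposition of $h'$, so $\sigma(h')=k$ and hence $\sigma^{-1}(k)=h'\in T\cap H$. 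This yields $\sigma^{-1}(T\cap K)\subseteq T\cap H$ directly from the second half of $(2)$, which is exactly what both of your harder directions require. With that substitution your proof is complete and fully general.
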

\begin{proof}
\ref{quo-equi-1} implies \ref{quo-equi-2}. By hypothesis $KT\cap ET=HT\cap ET=T$. Now,
\begin{align*}
(T\cap K)(T\cap E)&=(T\cap K)(KT\cap ET\cap E)=(T\cap K)(KT\cap E)\\&=KT\cap (T\cap K)E=KT\cap (KT\cap ET\cap K)E\\&=KT\cap (K\cap ET)E=KT\cap KE\cap ET\\&=KT\cap ET=T.
\end{align*}
Analogously, we have that $(T\cap H)(T\cap E)=T$.

\ref{quo-equi-2} implies \ref{quo-equi-3}.
\begin{align*}
((T\cap K)H\cap (T\cap K)E)(T\cap E)&=(T\cap K)H(T\cap E)\cap (T\cap K)E\\&=TH\cap (T\cap K)E=(T\cap K)({TH\cap E})\\&=(T\cap K)({(T\cap E)(T\cap H)H\cap E})\\&=(T\cap K)({(T\cap E)H\cap E})\\&=(T\cap K)(T\cap E)(H\cap E)=T.
\end{align*}

\ref{quo-equi-3} implies \ref{quo-equi-1}. 
\begin{align*}
HT&=H({(T\cap K)H(T\cap E)\cap (T\cap K)E})\\&=(T\cap K)H(T\cap E)\cap H(T\cap K)E\\&=(T\cap K)H(T\cap E)\cap G=(T\cap K)H(T\cap E).\\
ET&=({(T\cap K)H(T\cap E)\cap (T\cap K)E})E\\&=(T\cap K)H(T\cap E)E\cap (T\cap K)E\\&=G\cap (T\cap K)E=(T\cap K)E.
\end{align*}
Then $HT\cap ET=(T\cap K)H(T\cap E)\cap (T\cap K)E=T$. Furthermore $T=T\cap ET=T\cap (T\cap K)E=(T\cap K)(T\cap E)$. Therefore,
\begin{align*}
KT\cap ET&=K(T\cap K)(T\cap E)\cap E(T\cap K)(T\cap E)\\
&=K(T\cap E)\cap E(T\cap K)=(T\cap K)({K(T\cap E)\cap E})\\&=(T\cap K)(T\cap E)(K\cap E)=T.
\end{align*}
This proves that $KT/T\cap ET/T=T$ and $HT/T\cap ET/T=T$. Therefore $(G/T,KT/T,HT/T,ET/T)$ is a trifactorised group.
\end{proof}

\begin{cor}\label{cor-3fact-quo}
Let $(G,K,H,E)$ be a trifactorised group, let $\sigma\colon H\longrightarrow K$ be its associated bijective derivation, and let $T$ be a normal trifactorised subgroup of $(G,K,H,E)$. Then
\[(G/T,KT/T,HT/T,ET/T)\]
is a trifactorised group, with bijective derivation $\ol\sigma\colon HT/T\longrightarrow KT/T$ given by $\ol\sigma(hT)=\sigma(h)T$.
\end{cor}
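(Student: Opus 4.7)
The plan is to derive the statement as a direct application of Theorem~\ref{teo-3fact-quo-equi} together with a careful analysis of how $\sigma$ descends to cosets modulo~$T$. First I would verify that the quotient $(G/T, KT/T, HT/T, ET/T)$ is a trifactorised group. Since $T$ is itself a trifactorised subgroup, Proposition~\ref{prop-sub3fact-intersec} identifies its components as $T\cap K$, $T\cap H$, $T\cap E$; moreover, as a trifactorised group in its own right, the definition in~\eqref{eq-3fact} forces $T=(T\cap K)(T\cap E)=(T\cap H)(T\cap E)$. This is precisely condition~\ref{quo-equi-2} of Theorem~\ref{teo-3fact-quo-equi}, so its equivalence with~\ref{quo-equi-1} delivers the first claim.

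Next I would define $\overline{\sigma}\colon HT/T\longrightarrow KT/T$ by $\overline{\sigma}(hT)=\sigma(h)T$ for $h\in H$, and show it is well defined. If $h_1T=h_2T$ with $h_1,h_2\in H$, then $\tilde h:=h_1^{-1}h_2\in H\cap T$. The derivation identity gives
\[
\sigma(h_2)=\sigma(h_1\tilde h)=\sigma(h_1)\,\tilde\lambda(h_1)(\sigma(\tilde h)) = \sigma(h_1)\,{}^{e_{h_1}}\!\sigma(\tilde h),
\]
and by Proposition~\ref{prop-sub3fact-intersec} applied to the trifactorised subgroup~$T$, we have $\sigma(\tilde h)=\tilde\sigma(\tilde h)\in T\cap K$. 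Since $K$ and $T$ are both normal in $G$, so is $K\cap T$, whence ${}^{e_{h_1}}\!\sigma(\tilde h)\in K\cap T\subseteq T$, and therefore $\sigma(h_2)T=\sigma(h_1)T$.

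For bijectivity I would run the same derivation identity in reverse. If $\sigma(h_1)T=\sigma(h_2)T$, then using $\sigma(h_1^{-1}h_2)={}^{e_{h_1^{-1}}}(\sigma(h_1)^{-1}\sigma(h_2))$ and normality of $K\cap T$ in $G$, we obtain $\sigma(h_1^{-1}h_2)\in K\cap T$; since $\sigma$ restricts to the bijection $\tilde\sigma\colon T\cap H\to T\cap K$, this forces $h_1^{-1}h_2\in H\cap T$, so $h_1T=h_2T$. Surjectivity is immediate from $\sigma$ being onto~$K$. This is the step I expect to be the subtlest, because one has to play carefully with the cocycle identity rather than with the failure of $\sigma$ to be a homomorphism.

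Finally, to check that $\overline{\sigma}$ is \emph{the} bijective derivation of the quotient trifactorised group, I would match it against the construction of Proposition~\ref{prop-BrAss3fact} applied to $(G/T,KT/T,HT/T,ET/T)$. Given $hT\in HT/T$ with $h=k_he_h$, uniqueness of factorisation modulo $T$ gives $e_{hT}=e_hT$, so the induced action satisfies $\widetilde{\overline{\lambda}}(hT)(kT)=(\tilde\lambda(h)(k))T$ for $k\in K$. Applying the derivation identity for $\sigma$ to $h_1h_2$ and then passing to cosets yields
\[
\overline{\sigma}(h_1T\cdot h_2T)=\overline{\sigma}(h_1T)\,\widetilde{\overline{\lambda}}(h_1T)(\overline{\sigma}(h_2T)),
\]
as required, completing the proof.
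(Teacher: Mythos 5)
Your proposal is correct. For the first claim you and the paper do essentially the same thing: you feed condition~(2) of Theorem~\ref{teo-3fact-quo-equi} (which, as you note, is immediate from $T$ being a trifactorised subgroup via Proposition~\ref{prop-sub3fact-intersec} and~\eqref{eq-3fact}), while the paper invokes condition~(3); both are one-line consequences of the hypothesis. For the derivation, however, you take a more laborious route than necessary. The paper simply observes that the factorisation $g=k_ge_g$ descends to $gT=(k_gT)(e_gT)$ and that, since $KT/T\cap ET/T=1$ in the quotient, the intrinsic derivation of Proposition~\ref{prop-BrAss3fact} applied to $(G/T,KT/T,HT/T,ET/T)$ must send $hT$ to $k_hT=\sigma(h)T$; well-definedness and bijectivity come for free because that derivation is defined on cosets from the outset. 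Your final paragraph reproduces exactly this argument, which makes your two middle paragraphs (well-definedness and injectivity via the cocycle identity $\sigma(h_1\tilde h)=\sigma(h_1)\,\conj{e_{h_1}}{\sigma(\tilde h)}$ and normality of $K\cap T$ in $G$) logically redundant. They are nonetheless correct and have some independent value: they show concretely \emph{why} the formula is representative-independent without appealing to the quotient's trifactorised structure, at the price of using the normality of $T$ a second time. If you keep only your first and last paragraphs you recover the paper's proof.
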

\begin{proof}
By definition the equation \eqref{eq-3fact-quo} is satisfied, therefore, the quotient is a trifactorised group.

Given $g\in G$ its factorisation is $g=k_ge_g$ with $k_g\in K$ and $e_g\in E$, hence, $gT=(k_gT)(e_gT)$. Since the factorisation is unique, we have that $\ol\sigma(hT)=k_hT=\sigma(h)T$.
\end{proof}

\begin{remark}
Given a trifactorised group $(G,K,H,E)$ and a normal subgroup $T\leq G$, it follows that $(G/T,KT/T,HT/T,ET/T)$ is a trifactorised group if and only if $T$ is the kernel of some trifactorised group morphism. Therefore, given a trifactorised group morphism $f\colon (G_1,K_1,H_1,E_1)\longrightarrow (G_2,K_2,H_2,E_2)$, it follows that
\[\ker f=({(\ker f\cap K_1)H_1\cap({\ker f\cap K_1})E_1})({\ker f\cap E_1}).\]
\end{remark}

\begin{prop}\label{prop-3fact-Br-quo}
Let $(B,+,\cdot)$ be a brace with additive group $K$, multiplicative group $C$ and identity map $\delta\colon C\longrightarrow K$. Consider $(G,K,H,E)$ a trifactorised group associated with $(B,+,\cdot)$ with its corresponding epimorphism $\eta\colon C\longrightarrow E$. Let $I$ be an ideal of $B$ and suppose $I$ corresponds to the subgroup $L\leq K$. If $T=LH\cap LE$, then the trifactorised group $(G/T,KT/T,HT/T,ET/T)$ is associated with the brace $(B/I,+,\cdot)$ with epimorphism $\bar\eta\colon \ol C\longrightarrow\ol E$ given by $\bar\eta(cD)=\eta(c)T$ where $D=\delta^{-1}(L)$, $\ol C=C/D$ and $\ol E=ET/T$.
\end{prop}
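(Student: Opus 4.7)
The plan is to proceed in four stages: (i) identify $T=LH\cap LE$ as a normal trifactorised subgroup of $(G,K,H,E)$; (ii) apply Corollary~\ref{cor-3fact-quo} to conclude that $(G/T,KT/T,HT/T,ET/T)$ is a trifactorised group; (iii) identify its associated brace with $B/I$ by applying the functor $\B$ to the natural projection $G\longrightarrow G/T$; and (iv) verify the stated formula for~$\bar\eta$.

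Stage (i) is immediate from the results of Section~\ref{sec-substruc}: Proposition~\ref{prop-substr-3fact}~(\ref{en-id}) gives that $T$ is normal in $G$, and Proposition~\ref{prop-3factAss-SubBr} supplies the decomposition $(T,L,LE\cap H,LH\cap E)$, so $T$ is a normal trifactorised subgroup with $T\cap K=L$, $T\cap H=LE\cap H$, and $T\cap E=LH\cap E$. Stage (ii) is then Corollary~\ref{cor-3fact-quo}. For (iii), the canonical projection $p\colon G\longrightarrow G/T$ is an epimorphism of trifactorised groups, since it maps $K$, $H$, $E$ onto $KT/T$, $HT/T$, $ET/T$ respectively. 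Applying Proposition~\ref{prop-BrMorphAss} yields that $\B(p)=p|_K$ is a brace epimorphism $B\longrightarrow\B(G/T,KT/T,HT/T,ET/T)$ whose kernel equals $K\cap T=L$, which corresponds to $I$, so the associated brace is identified with $B/I$.

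For stage (iv), I would invoke the explicit description in the proof of Proposition~\ref{prop-Br3factEquiv}(1): the defining epimorphism of a trifactorised group regarded as associated with its own brace is $\pi_{ET/T}|_{HT/T}\circ(\ol\sigma)^{-1}$, post-composed with the natural identification of $\ol C=C/D$ with $HT/T$ that sends $cD\mapsto\delta(c)\eta(c)T$. Since $\ol\sigma(hT)=\sigma(h)T$ by Corollary~\ref{cor-3fact-quo}, the chain $cD\mapsto\delta(c)\eta(c)T\mapsto\eta(c)T$ yields the stated formula $\bar\eta(cD)=\eta(c)T$. Well-definedness reduces to $\eta(D)\subseteq T$: for $d\in D=\delta^{-1}(L)$, we have $\delta(d)\in L$ and $\delta(d)\eta(d)\in H$, so $\eta(d)=\delta(d)^{-1}(\delta(d)\eta(d))\in LH\cap E\subseteq LH\cap LE=T$. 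The only genuinely delicate step is (iv), where one must trace the chain $\ol C\cong (KT/T,\boxdot)\cong HT/T$ through the quotient carefully and distinguish the additive realisation of the multiplicative group of $\B(G/T,KT/T,HT/T,ET/T)$ from $\ol C$ itself; the remaining arguments are a direct assembly of the structural results of Sections~\ref{sec-3fact} and~\ref{sec-substruc}.
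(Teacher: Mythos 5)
Your argument is correct, and it reaches the same two endpoints as the paper (the quotient is trifactorised with derivation $\ol\sigma(hT)=\sigma(h)T$, and the defining epimorphism is $\bar\eta(cD)=\eta(c)T$), but the middle stage is organised differently. The paper identifies the associated brace of $(G/T,KT/T,HT/T,ET/T)$ by a direct computation: using $\ol\sigma$ from Corollary~\ref{cor-3fact-quo} it evaluates $\delta(c_1)T\boxdot\delta(c_2)T=\sigma\bigl(\sigma^{-1}(\delta(c_1))\sigma^{-1}(\delta(c_2))\bigr)T=\delta(c_1c_2)T$ and observes that these are literally the operations of $B/I$ carried by $KT/T$; the epimorphism is then read off from the membership $\delta(c)\eta(c)T\in HT/T$, exactly as in your stage~(iv). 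You instead apply the functor $\B$ to the canonical projection $p\colon G\longrightarrow G/T$ and identify the associated brace as $B/\ker(\B(p))=B/L=B/I$ via Propositions~\ref{prop-BrMorphAss} and~\ref{prop-3factMorphMonoEpiIso}. This is cleaner and avoids re-deriving the $\boxdot$ formula, at the cost of invoking the first isomorphism theorem for skew braces (kernels of brace epimorphisms are ideals and induce isomorphisms onto the image), which the paper does not state but which is standard; you should also note explicitly that the resulting isomorphism $B/I\longrightarrow\B(G/T,KT/T,HT/T,ET/T)$ is the identity on underlying sets under the canonical identification $K/L=KT/T$, since the proposition asserts equality of braces rather than mere isomorphism. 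Your well-definedness check $\eta(D)\subseteq LH\cap E\subseteq T$ is a detail the paper leaves implicit, and is a welcome addition.
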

\begin{proof}
  Note that the additive group of $B/I$ is $K/L=KT/T$ and the multiplicative group is $C/D$. The operations of the brace associated to the quotient are:
  \begin{align*}
\delta(c_1)T\delta(c_2)T&=\delta(c_1)\delta(c_2)T\\
\delta(c_1)T\boxdot\delta(c_2)T&=\ol\sigma({\ol\sigma^{-1}({\delta(c_1)T})\ol\sigma^{-1}({\delta(c_2)T})})\\&=\ol\sigma({({\sigma^{-1}({\delta(c_1)})T})({\sigma^{-1}({\delta(c_2)T})})})\\&=\sigma({\sigma^{-1}({\delta(c_1)})\sigma^{-1}({\delta(c_2)})})T\\&=\sigma({\delta(c_1)\delta(c_2)^{\eta(c_1)}\eta(c_1c_2)})T=\delta(c_1c_2)T
\end{align*}
These are the operations of $B/I$ viewed in its additive group $KT/T$. Therefore, $(G/T,KT/T,HT/T,ET/T)$ is associated with $B/I$.

The identity map $\ol\delta\colon C/D\longrightarrow KT/T$ of $B/I$ is given by $\ol\delta(cD)=\delta(c)T$. It follows that $\ol\delta(cD)\eta(c)T=\delta(c)\eta(c)T\in HT/T$, therefore, the epimorphism that constructs the trifactorised group $(G/T,KT/T,HT/T,ET/T)$ must be $\bar\eta\colon C/D\longrightarrow ET/T$ given by $\bar\eta(cD)=\eta(c)T$.\qedhere


\end{proof}


For the large trifactorised group associated with $(B,+,\cdot)$, we have:
\begin{prop}
Let $(B,+,\cdot)$ be a brace, denote $\L(B,+,\cdot)=(G,K,H,E)$ and consider $I$ an ideal of $B$ with $L$ its associated subgroup of $K$. If $T=LE\cap LH$, it follows that the trifactorised quotient of $G$ by $T$ is $\L({B/I,+,\cdot}).$
\end{prop}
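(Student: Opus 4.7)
My plan is to obtain the result as a direct consequence of the two ingredients already developed: Proposition~\ref{prop-3fact-Br-quo}, which identifies the quotient $(G/T, KT/T, HT/T, ET/T)$ as \emph{some} trifactorised group associated with $B/I$, and Proposition~\ref{prop-iso-large-small}(1), which singles out the large trifactorised group among all trifactorised groups associated with a brace as the one whose defining epimorphism has trivial kernel. So the whole proof reduces to verifying this kernel condition after passing to the quotient.

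First I would apply Proposition~\ref{prop-3fact-Br-quo} to conclude that $(G/T, KT/T, HT/T, ET/T)$ is a trifactorised group associated with $(B/I,+,\cdot)$ whose defining epimorphism $\bar\eta\colon C/D\longrightarrow ET/T$ is given by $\bar\eta(cD)=\eta(c)T$, where $\eta\colon C\longrightarrow E$ is the epimorphism corresponding to $\L(B,+,\cdot)$. By Proposition~\ref{prop-iso-large-small}(1), the hypothesis that we start from the \emph{large} trifactorised group means precisely $\ker\eta=1$, so $\eta$ is a group isomorphism onto $E$.

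Next I would compute $\ker\bar\eta$. An element $cD\in C/D$ lies in $\ker\bar\eta$ exactly when $\eta(c)\in T\cap E$. Using $E\subseteq LE$, Dedekind's identity gives
\[
T\cap E=(LE\cap LH)\cap E=LH\cap E.
\]
On the other hand, combining Lemma~\ref{lemma-3fact-epi-K-E} (applied with $L\leq K$, so $L^{-1}H=LH$) with the identification $\eta=\pi_E|_H\circ\beta$ from the proof of Proposition~\ref{prop-3factAss-SubBr} yields
\[
\eta(D)=\pi_E(\sigma^{-1}(L))=LH\cap E.
\]
Hence $\eta(D)=T\cap E$. Because $\eta$ is injective, $\eta(c)\in T\cap E=\eta(D)$ is equivalent to $c\in D$, i.e.\ $\ker\bar\eta$ is trivial. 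Applying Proposition~\ref{prop-iso-large-small}(1) once more, the quotient trifactorised group is isomorphic to $\L(B/I,+,\cdot)$.

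The only subtle point, and the one I would be most careful with, is the identification $\eta(D)=T\cap E$: it is not a new computation but rather the exact statement extracted from the proof of Proposition~\ref{prop-3factAss-SubBr}, and the use of $\ker\eta=1$ is essential to conclude that $\eta^{-1}(T\cap E)$ equals $D$ rather than merely contains it. Once this bookkeeping is in place the conclusion is immediate from Proposition~\ref{prop-iso-large-small}(1).
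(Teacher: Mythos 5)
Your proof is correct, and it differs from the paper's in the final verification step. Both arguments start the same way, invoking Proposition~\ref{prop-3fact-Br-quo} to see that $(G/T,KT/T,HT/T,ET/T)$ is a trifactorised group associated with $B/I$ with defining epimorphism $\bar\eta(cD)=\eta(c)T$. The paper then certifies that this quotient is the large one by a direct Dedekind computation showing $KT\cap HT=T$, whose only input is $K\cap H=1$ (valid because one starts from $\L(B,+,\cdot)$); this is exactly the statement that $KT/T\cap HT/T$ is trivial. You instead use the kernel criterion of Proposition~\ref{prop-iso-large-small}(1) and reduce everything to the equality $T\cap E=LH\cap E=\eta(D)$, which you correctly extract from Lemma~\ref{lemma-3fact-epi-K-E} and the proof of Proposition~\ref{prop-3factAss-SubBr}, combined with the injectivity of $\eta$. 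The two criteria are equivalent, since the kernel of the defining epimorphism of a trifactorised group corresponds under $\sigma$ to $K\cap H$, so $\ker\bar\eta=1$ if and only if $KT\cap HT=T$; your route involves slightly more bookkeeping with the epimorphisms, but it makes explicit where the hypothesis $\ker\eta=1$ is used and isolates the key fact in the single identity $\eta(D)=T\cap E$, whereas the paper's subgroup computation is shorter and self-contained. Both are complete proofs.
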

\begin{proof}
We already know that $(G/T,KT/T,HT/T,ET/T)$ is a trifactorised group associated with $(B/I,+,\cdot)$. To prove that it is the large one, we need to prove that $KT\cap HT=T$.
\begin{align*}
KT\cap HT&=K(T\cap K)(T\cap H)\cap H(T\cap K)(T\cap H)\\
&=K(T\cap H)\cap H(T\cap K)=(T\cap K)({K(T\cap H)\cap H})\\&=(T\cap K)(K\cap H)(T\cap H)=(K\cap H)T=T.
\end{align*}
\qedhere
\end{proof}

The above proposition is false for the small trifactorised group associated with $(B,+,\cdot)$.

\begin{ex}
Consider the trifactorised group $(G,K,H,E)$ where $K=\gen{x}\cong C_6$, $E=\gen{y}\cong C_2$, $G=[K]E\cong D_{12}$ and $H=\gen{x^2, x^3y}\cong S_3$. Since $x^y=x^{-1}$ it follows that $\Cent_E(K)=1$, therefore, the trifactorised group is small.

Let $T=\gen{x^2}=H\cap K$. Since $\card{G}=12$ and $\card{T}=3$ it follows that $\card{G/T}=4$. In particular, $G/T$ is abelian. Hence, $\card{{\Cent_{ET/T}(K/T)}}=\card{ET/T}=\card{E}=2$. Thus $(G/T,K/T,H/T,ET/T)$ is not the small trifactorised group.
\end{ex}

We bring the paper to a close by showing that every trifactorised group associated to a brace $B$ is a quotient of the large trifactorised group and has the small trifactorised group as a quotient. 


\begin{prop}\label{prop-3fact-quo-E}
Let $(B,+,\cdot)$ be a brace with additive group $K$ and multiplicative group $C$. Consider $(G,K,H,E)$ a trifactorised group associated with $B$ with corresponding epimorphism $\eta\colon C\longrightarrow E$, and $T$ a normal subgroup of $G$ contained in $E$. Then $(G/T,KT/T,HT/T,E/T)$ is a trifactorised group associated with $B$ with its corresponding epimorphism $\bar\eta=\pi_T\circ \eta$ where $\pi_T\colon E\longrightarrow E/T$ is the natural epimorphism.
\end{prop}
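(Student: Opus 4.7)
My plan is first to verify via Theorem~\ref{teo-3fact-quo-equi} that $(G/T, KT/T, HT/T, E/T)$ is a trifactorised group, and then to identify it with the one produced by the construction of Section~\ref{sec-3fact} applied to $\bar\eta = \pi_T\circ\eta$.

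The first step is short: from $T\le E$ together with $K\cap E = H\cap E = 1$ one immediately obtains $T\cap K = T\cap H = 1$, so $(T\cap K)(T\cap E) = T = (T\cap H)(T\cap E)$, and Theorem~\ref{teo-3fact-quo-equi} applies (note that $ET/T = E/T$ because $T\le E$). The crucial observation for the second step is that $T$ centralises $K$: both $K$ and $T$ are normal in $G$, so $[K,T]\le K\cap T = 1$. This implies, on the one hand, that the action $\bar\lambda\colon E\longrightarrow\Aut(K)$ factors through $\pi_T$; and on the other, that $\bar\eta = \pi_T\circ\eta\colon C\longrightarrow E/T$ is an epimorphism with $\ker\bar\eta = \eta^{-1}(T)\le\ker\lambda$, since $\eta(c)\in T$ forces $\lambda(c) = \bar\lambda(\eta(c)) = \id$. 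Hence $\bar\eta$ satisfies the hypotheses of the construction in Section~\ref{sec-3fact} and yields a trifactorised group associated with $(B,+,\cdot)$ whose underlying group is the semidirect product $K\rtimes(E/T)$ with respect to the induced action.

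To conclude, I would exhibit the natural map $\phi\colon K\rtimes(E/T)\longrightarrow G/T$ defined by $(k,eT)\mapsto keT$. This is a well-defined bijection by the uniqueness of the factorisation $G=KE$ combined with $K\cap T = 1$, and a group homomorphism because the $E$-action on $K$ descends to $E/T$. Under $\phi$ the factor $K$ maps onto $KT/T$, the factor $E/T$ onto $ET/T = E/T$, and the subgroup $\{\delta(c)\bar\eta(c)\mid c\in C\}$ onto $\{\delta(c)\eta(c)T\mid c\in C\} = HT/T$. This identifies $(G/T,KT/T,HT/T,E/T)$ with the trifactorised group associated with $(B,+,\cdot)$ via $\bar\eta$. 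The only mildly delicate point is the descent of the action to the quotient, and that is entirely taken care of by $[K,T] = 1$; everything else is routine bookkeeping.
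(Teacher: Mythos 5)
Your proof is correct. It agrees with the paper's proof in the first step (invoking Theorem~\ref{teo-3fact-quo-equi} via $T\cap K=T\cap H=1$), but the second half takes a genuinely different route. The paper stays inside $G/T$: it computes the operations of the associated brace $\B(G/T,KT/T,HT/T,E/T)$ directly, checks that $\delta(c_1)T\boxdot\delta(c_2)T=\delta(c_1c_2)T$, uses $KT/T\cong K$ to conclude the brace is $(B,+,\cdot)$, and then reads off the epimorphism from the form $\delta(c)\eta(c)T$ of the elements of $HT/T$. You instead work forwards from $\bar\eta$: you isolate the observation that $[K,T]\le K\cap T=1$ (both being normal with trivial intersection), deduce that the action of $E$ on $K$ descends to $E/T$ and that $\ker\bar\eta=\eta^{-1}(T)\le\ker\lambda$, so that $\bar\eta$ is an admissible epimorphism for the construction of Section~\ref{sec-3fact}, and then exhibit the explicit isomorphism $[K](E/T)\longrightarrow G/T$ matching the three distinguished subgroups. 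Your version buys a cleaner conceptual justification of \emph{why} the quotient is again associated with $B$ (the centralising relation $[K,T]=1$ is the whole point, and the paper leaves it implicit), at the cost of having to verify an extra isomorphism; the paper's version is shorter because it reuses the computation of Proposition~\ref{prop-3fact-Br-quo} verbatim. Both are complete.
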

\begin{proof}
Proposition \ref{teo-3fact-quo-equi} shows that $(G/T,KT/T,HT/T,E/T)$ is a trifactorised group. Analogously to \ref{prop-3fact-Br-quo} the operations of the associated brace are $\delta(c_1)T\delta(c_2)T=\delta(c_1)\delta(c_2)T$ and $\delta(c_1)T\boxdot\delta(c_2)T=\delta(c_1c_2)T$. But in this case we have that $KT/T\cong K$, therefore, we have that the associated brace is $(B,+,\cdot)$.

The elements of our trifactorised group are of the form $\delta(c_1)\eta(c_2)T$. Since its bijective derivation is $\ol\delta\colon C\longrightarrow KT/T$ given by $\ol\delta(c)=\delta(c)T$, we have that the epimorphism that corresponds to the trifactorised group is $\bar\eta\colon C\longrightarrow E/T$ given by $\bar\eta(c)=\eta(c)T=({\pi_T\circ\eta})(c)$.
\end{proof}

\begin{lemma}\label{lemma-3fact-epi-ker-E}
Let $f\colon (G_1,K_1,H_1,E_1)\longrightarrow (G_2,K_2,H_2,E_2)$ be a trifactorised group epimorphism such that $f|_{K_1}$ is an isomorphism. Then $\ker f\leq E$.
\end{lemma}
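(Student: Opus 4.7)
The plan is to exploit the unique factorisation $G_1 = K_1 E_1$ with $K_1 \cap E_1 = 1$ and then to use the hypothesis that $f|_{K_1}$ is injective, which forces the $K_1$-component of any element of $\ker f$ to be trivial.

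More concretely, I would take an arbitrary element $g \in \ker f$ and write it uniquely as $g = k e$ with $k \in K_1$ and $e \in E_1$, using the defining property \eqref{eq-3fact} of the trifactorised group $(G_1, K_1, H_1, E_1)$. Applying $f$ gives $1 = f(g) = f(k)f(e)$, so $f(k) = f(e)^{-1}$. Since $f(k) \in K_2$ and $f(e)^{-1} \in E_2$, the element $f(k)$ lies in $K_2 \cap E_2 = 1$, and similarly $f(e) = 1$. At this point the hypothesis that $f|_{K_1}$ is an isomorphism kicks in: from $f(k) = 1$ and the injectivity of $f|_{K_1}$ we conclude $k = 1$, so $g = e \in E_1$.

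There is essentially no obstacle here; the argument is a one-line consequence of the trifactorisation axioms once one has the right factorisation in mind. (Note that the surjectivity of $f$ is not actually used in this direction, and that the conclusion is that $\ker f \leq E_1$; the statement's unsubscripted ``$E$'' is a typographical slip for $E_1$.) I would finish by remarking that, combined with Proposition~\ref{prop-3factMorphMonoEpiIso}~(\ref{prop-3factMorphEpi}), this says that an epimorphism of trifactorised groups whose $K$-restriction is an isomorphism is determined, up to such identifications, by the normal subgroup $\ker f$ of $E_1$, setting the stage for the announced statement that every associated trifactorised group is a quotient of the large one and has the small one as a quotient.
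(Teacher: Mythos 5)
Your proof is correct and follows exactly the same route as the paper's: factor $g\in\ker f$ as $g=ke$ with $k\in K_1$, $e\in E_1$, deduce $f(k)\in K_2\cap E_2=1$, and use injectivity of $f|_{K_1}$ to get $k=1$. Your parenthetical observations (surjectivity is not used, and the conclusion should read $\ker f\leq E_1$) are also accurate.
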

\begin{proof}
Let $ke\in\ker f$ with $k\in K_1$ and $e\in E_1$. Then $1=f(ke)=f(k)f(e)$. Since $f(k)\in K_2$ and $f(e)\in E_2$ we have that $f(k)=1$ and $f(e)=1$. Hence $f|_{K_1}$ is an isomorphism, and so, $k=1$ and $\ker f\leq E$.
\end{proof}

\begin{teo}\label{sql}
    Let $(B,+,\cdot)$ be a brace with additive group $K$ and multiplicative group $C$. Consider $(G_i,K_i,H_i,E_i)$ for $i=1$, $2$ two trifactorised groups associated with $B$ with corresponding epimorphisms $\eta_i\colon C\longrightarrow E_i$. If $\ker\eta_1\leq\ker\eta_2$, then $(G_2,K_2,H_2,E_2)$ is a quotient of $(G_1,K_1,H_1,E_1)$. In particular:
    \begin{enumerate}
        \item Every associated trifactorised group of $B$ is quotient of $\L(B,+,\cdot)$.
        \item Every associated trifactorised group of $B$ has $\S(B,+,\cdot)$ as a quotient.
    \end{enumerate}
\end{teo}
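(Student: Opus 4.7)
The plan is to piggyback on Proposition~\ref{prop-3factAss-exists-epi}, which, under the hypothesis $\ker\eta_1\leq\ker\eta_2$, already constructs an explicit trifactorised group epimorphism $f\colon (G_1,K_1,H_1,E_1)\longrightarrow (G_2,K_2,H_2,E_2)$ via $f(k\eta_1(c))=k\eta_2(c)$ (after identifying $K_1=K_2$ with the common additive group of $B$). The only further point to check is that the existence of such an epimorphism exhibits the codomain as a ``quotient'' in the sense of Corollary~\ref{cor-3fact-quo}: setting $T=\ker f$, the group-theoretic first isomorphism theorem gives $G_1/T\cong G_2$, and compatibility of $f$ with the triple factorisation together with surjectivity of $f|_{K_1}$, $f|_{H_1}$, $f|_{E_1}$ (Proposition~\ref{prop-3factMorphMonoEpiIso}) yields $K_1T/T\cong K_2$, $H_1T/T\cong H_2$, $E_1T/T\cong E_2$. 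The remark following Corollary~\ref{cor-3fact-quo} guarantees that $T=\ker f$ automatically satisfies the condition of Theorem~\ref{teo-3fact-quo-equi}, so no extra verification is needed.

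For the two particular cases, the strategy is to identify the extremal kernels among the epimorphisms $\eta\colon C\longrightarrow E$ defining associated trifactorised groups, these being constrained by $\ker\eta\leq\ker\lambda$. By Proposition~\ref{prop-iso-large-small}~(1), $\L(B,+,\cdot)$ is (up to isomorphism) the associated trifactorised group corresponding to $\ker\eta=1$, the minimal possible kernel; hence for any associated trifactorised group with epimorphism $\eta'$ we have $1\leq\ker\eta'$, and the main claim exhibits every such trifactorised group as a quotient of $\L(B,+,\cdot)$. Dually, by Proposition~\ref{prop-iso-large-small}~(2), $\S(B,+,\cdot)$ corresponds to the maximal admissible kernel $\ker\lambda$, and every admissible epimorphism $\eta$ satisfies $\ker\eta\leq\ker\lambda$ by definition, so the main claim delivers $\S(B,+,\cdot)$ as a quotient of every associated trifactorised group.

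The main obstacle is essentially absent: Proposition~\ref{prop-3factAss-exists-epi} performs the heavy lifting, and Proposition~\ref{prop-iso-large-small} pins down the two extremal kernels. The only subtle point is the translation between ``there is a trifactorised group epimorphism onto $(G_2,K_2,H_2,E_2)$'' and ``$(G_2,K_2,H_2,E_2)$ is a quotient in the sense of Corollary~\ref{cor-3fact-quo}''; as noted above, this is handled by the remark after that corollary, which characterises admissible kernels precisely as kernels of trifactorised group morphisms.
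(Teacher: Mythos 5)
Your proof is correct and follows essentially the same route as the paper: both rest on the epimorphism supplied by Proposition~\ref{prop-3factAss-exists-epi} and on Proposition~\ref{prop-iso-large-small} to identify $\L(B,+,\cdot)$ and $\S(B,+,\cdot)$ as the extremal cases $\ker\eta=1$ and $\ker\eta=\ker\lambda$. The only (inessential) difference is that you realise the quotient via the first isomorphism theorem together with the remark after Corollary~\ref{cor-3fact-quo}, whereas the paper instead invokes Lemma~\ref{lemma-3fact-epi-ker-E} to place $\ker f$ inside $E_1$ and then applies Proposition~\ref{prop-3fact-quo-E}.
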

\begin{proof}
    The first part is a direct application of Proposition \ref{prop-3factAss-exists-epi}, Lemma \ref{lemma-3fact-epi-ker-E} and Proposition \ref{prop-3fact-quo-E}. The second part follows from the first one and Proposition \ref{prop-iso-large-small}.
\end{proof}

\section{Acknowledgements}

This work is supported by the grant CIAICO/2023/007 from the Conselleria d'Educació, Universitats i Ocupació, Generalitat Valenciana.

\bibliographystyle{plain}
\bibliography{bibgroup}
\end{document}